\documentclass[runningheads,envcountsect,envcountsame]{llncs}

\usepackage{amsmath,amssymb,mathtools}
\usepackage{enumitem}
\usepackage{float}
\usepackage[hidelinks]{hyperref}
\newcommand{\Prb}{\mathbb P}
\newcommand{\E}{\mathbb E}
\newcommand{\1}{\mathbf 1}

\title{Multiset Colorings of Random Graphs Across Density Regimes}
\titlerunning{Multiset Colorings of Random Graphs}
\author{Arash Ahadi \and Sharareh Alipour}
\authorrunning{A. Ahadi and S. Alipour}
\institute{Tehran Institute for Advanced Studies (TeIAS), Khatam University, Iran}

\begin{document}
\maketitle

\begin{abstract}
We show that almost every graph admits a partition of its vertex set into three parts such that no two adjacent vertices have the same number of neighbors in each of the three parts. Equivalently, for $G\sim G(n,1/2)$, $\chi_m(G)\le3$ with high probability, improving the previously known bound of five. Here $\chi_m(G)$ denotes the multiset chromatic number of $G$, the minimum number of parts in a vertex partition whose neighbor-count vectors distinguish every pair of adjacent vertices. In fact, the three-part bound holds for every fixed $0.185<p<0.509$. More generally, for every fixed $p\in(0,1)$, $G\sim G(n,p)$ satisfies $\chi_m(G)\le4$ with high probability. These results are obtained by converting the unresolved edges of a carefully chosen initial partition into hyperplanes of a Boolean cube and applying the Linial--Radhakrishnan theory of essential covers.

We also determine how $\chi_m(G)$ grows when the graph is polynomially close to complete. For every fixed $\beta\in(0,1)$ and $G\sim G\!\left(n,1-n^{-(1-\beta)}\right)$, with high probability $\frac{2}{\beta}\le \chi_m(G)\le \left\lfloor\frac{2}{\beta}\right\rfloor+5$. Thus $\chi_m(G)=2/\beta+O(1)$. The lower bound is spectral, while the upper bound follows from multinomial anti-concentration and the Lov'asz Local Lemma.
\keywords{multiset coloring \and random graph \and Linial--Radhakrishnan theory of essential covers
\and spectral method}
\end{abstract}

\section{Introduction}

For a graph $G=(V,E)$, let $N_G(v)$ denote the open neighborhood of $v$, and
write $[r]=\{1,\ldots,r\}$ for positive integers $r$. Given a partition
$V=P_1\cup\cdots\cup P_q$, define the
\emph{signature} of a vertex $v$ by
\[
 \sigma(v)=\bigl(|N_G(v)\cap P_i|\bigr)_{i\in[q]}.
\]
The partition is a \emph{multiset coloring} if $\sigma(u)\ne\sigma(v)$ for
every edge $uv\in E(G)$, and the minimum possible number of parts is the
\emph{multiset chromatic number} $\chi_m(G)$
\cite{ChartrandOkamotoSalehiZhang2009}. 

The notion of \emph{multiset coloring} was introduced by Chartrand, Okamoto, Salehi, and Zhang~\cite{ChartrandOkamotoSalehiZhang2009}. Subsequent work treated further graph classes~\cite{OkamotoSalehiZhang2010}, a closely related sum-based variant~\cite{ChartrandOkamotoZhang2010}, powers of cycles~\cite{FengLin2012}, and generalized corona graphs~\cite{FengLin2016}.
Later, Dehghan, Sadeghi, and Ahadi studied the same notion under the name \emph{sigma partitioning}~\cite{DSA18}, obtaining results on its computational complexity and on random graphs. In particular, they proved that for every fixed $p\in(0,1)$, it holds that $\chi_m(G(n,p))\le5$ with high probability.

Although complete graphs show that the vertex version admits no universal
constant bound, random graphs behave strikingly differently. There is a
suggestive parallel with the classical 1--2--3 Conjecture for
edge-weightings. Karo\'nski, {\L}uczak, and Thomason conjectured that weights
$1,2,3$ suffice to distinguish the weighted degrees of adjacent vertices~\cite{KaronskiLuczakThomason2004}. A
landmark intermediate result of Kalkowski, Karo\'nski, and Pfender established
the universal bound five~\cite{KalkowskiKaronskiPfender2010}, and the
conjectured bound three was later proved by Keusch~\cite{Keusch2024}. In our
vertex-partition setting, the previously known bound for $G(n,1/2)$ was
likewise five with high probability~\cite{DSA18}. We first improve the
fixed-density bound from five to four for every fixed $p\in(0,1)$. We then
show that three parts already suffice with high probability for $G(n,1/2)$,
and in fact throughout an explicit interval of fixed edge densities. For every
fixed $p\in(0,1)$ satisfying $ \frac{3\sqrt3\,\pi}{2}\,p(1-p)^2>1$,
we prove
\[
 \Prb \! \bigl(\chi_m(G(n,p))\le3\bigr)\longrightarrow1.
\]
In particular, this condition holds throughout
$0.185<p<0.509$. At $p=1/2$ this improves the previous upper bound
from five to three.

The technical obstacle beyond the previous five-part bound is to eliminate
all residual collisions simultaneously while using only one additional split.
Our proof isolates this obstacle by encoding every unresolved edge as a
hyperplane of a Boolean cube and applying the Linial--Radhakrishnan
essential-cover theorem. The probabilistic part is reduced to showing that
there are only linearly many residual edges, while every corresponding normal
vector has linear support with a strictly larger constant.

Since $\chi_m(K_n)=n$, it is natural to ask how the multiset chromatic number grows as a random graph approaches completeness.
We determine the correct leading term when the graph is polynomially
close to complete. In particular, for every fixed $\beta\in(0,1)$ and
$p=1-n^{-(1-\beta)}$, with high probability
\[
 \frac2\beta\le\chi_m(G(n,p))
 \le\left\lfloor\frac2\beta\right\rfloor+5.
\]
Thus $\chi_m(G(n,p))=2/\beta+O(1)$. The lower bound is spectral, whereas the
upper bound combines multinomial anti-concentration with the Lov\'asz Local
Lemma.

Section~2 proves the fixed-density bounds and Section~3 treats the near-complete regime.

\section{The Fixed-Density Regime}\label{sec:fixed}

We first prove the three-part bound for an interval of fixed edge densities
and then the four-part bound for every fixed edge density. Throughout this
section $p$ is fixed; constants implicit in $O(\cdot)$ may depend on $p$.

\begin{theorem}\label{thm:half-three}
Fix $p\in(0,1)$ satisfying
$\frac{3\sqrt3\,\pi}{2}p(1-p)^2>1$, and let $G\sim G(n,p)$. Then
\[\Prb\bigl(\chi_m(G)\le3\bigr)\longrightarrow1
 \qquad (n\to\infty).\]
\end{theorem}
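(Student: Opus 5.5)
The plan is a two-stage argument: a random initial partition that already separates almost every edge, followed by a Boolean correction step in which each remaining bad edge forbids one hyperplane of a cube. First, expose $G$ and fix a random set $S\subseteq V$ of linear size $|S|=\gamma n$, with $\gamma$ to be optimised. The third part will be built from $S$, while the vertices outside $S$ are split into three parts uniformly at random. Encode the final placement of the vertices of $S$ by a vector $x\in\{0,1\}^S$, where $x_w$ says which of two designated parts $w$ joins.

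For an edge $uv$, write $D_{uv}=N(u)\triangle N(v)$. Whether $\sigma(u)=\sigma(v)$ depends only on how $D_{uv}$ is split among the parts. Once the outside split is fixed, the condition $\sigma(u)=\sigma(v)$ becomes a single affine equation $\langle a_{uv},x\rangle=b_{uv}$. Its normal vector $a_{uv}$ has entries $\pm1$ exactly on $D_{uv}\cap S$. The edge can only become \emph{unresolved} if the outside contribution already agrees in the coordinate not controlled by $x$; otherwise the edge is separated for every $x$.

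Next I will estimate the number of such unresolved edges. Here $|D_{uv}|\approx 2p(1-p)n$. A bivariate local limit theorem for the multinomial difference over three parts gives, per edge, a collision probability of order $\frac{c}{p(1-p)n}$. The determinant of the trinomial difference covariance is what should produce the constant $3\sqrt3/(2\pi)$ in the hypothesis. Summing over the roughly $pn^2/2$ edges, the expected number of unresolved edges is $\alpha n$ for an explicit $\alpha=\alpha(p,\gamma)$. A second moment computation then shows $|R|\le(\alpha+\varepsilon)n$ w.h.p. The dependence between two edges is weak unless they share a vertex, and edges that share a vertex contribute $O(n)$ to the variance relative to a squared mean of order $n^2$.

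Simultaneously, Chernoff bounds plus a union bound give that every normal vector $a_{uv}$ has support at least $(s-\varepsilon)n$, where $s\approx\gamma p(1-p)\cdot(\text{factor})$. Any residual factor $(1-p)$ enters through requiring both endpoints to behave suitably on $S$. The hypothesis $\frac{3\sqrt3\pi}{2}p(1-p)^2>1$ is exactly the numerical inequality $s>\alpha$ after optimising $\gamma$. I expect the bookkeeping of these constants, so that they line up with the stated condition, to be delicate but routine.

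The main step is to show that the hyperplanes $\{\langle a_{uv},x\rangle=b_{uv}\}_{uv\in R}$ do not cover $\{0,1\}^S$. Any point left uncovered yields a valid $3$-part multiset coloring. Suppose, for contradiction, that they do cover the cube. Then some subfamily $R'\subseteq R$ is an \emph{essential} cover: removing any member of $R'$ uncovers a point. I would restrict to the coordinates that actually occur in $R'$ and invoke the Linial--Radhakrishnan theory of essential covers. The form of their result I need is that in an essential cover, the number of hyperplanes is at least of the order of the support of each normal vector. Concretely, I want to show that $|R'|$ must be at least the minimum support, up to lower-order terms, so that $|R'|\ge(s-\varepsilon)n>(\alpha+\varepsilon)n\ge|R|$, a contradiction.

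I expect this last step to be the main obstacle. First, one has to extract from Linial--Radhakrishnan a statement with the sharp linear constant, not merely $\Omega(\sqrt{\cdot})$. Second, one has to make sure the supports stay large after fixing irrelevant coordinates. My first attempt would be their argument that each coordinate of the support of a member of an essential cover must be "used" by the other hyperplanes, which I would combine with the uniform linear lower bound on $|D_{uv}\cap S|$. If only a weaker constant comes out, I would fall back on enlarging $S$ and re-optimising $\gamma$.
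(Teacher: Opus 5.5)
Your architecture is the paper's: a partition that leaves only linearly many unresolved edges, a Boolean vector $x$ that splits one part, each residual edge turned into a hyperplane, and Linial--Radhakrishnan. But there is a genuine gap. The covering step needs a constant that Linial--Radhakrishnan does not supply, and your construction is too lossy to reach the stated threshold even with the correct constant.

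\textbf{The covering constant.} The available lemma is already linear, so there is nothing ``sharp'' to extract; the $\sqrt m$ bound concerns the total size of an essential cover. It says that every member of an essential cover by $k$ hyperplanes has $|\operatorname{supp}(v_i)|\le 2k$. The constant really is $2$, and the per-member form you ask for, $k\ge|\operatorname{supp}(v_i)|$, is false. On $\{0,1\}^{2t}$, the hyperplanes $x_{2i-1}+x_{2i}=1$ for $i\le t$, together with $\sum_i(x_{2i-1}-x_{2i})=0$, form an essential cover with $k=t+1$ and a member of support $2t$. So the contradiction requires minimum support $>2|R|$, i.e.\ $s>2\alpha$, not $s>\alpha$.

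The hypothesis of the theorem is exactly this factor-$2$ inequality for the paper's configuration. There a fixed part $A$ has $|A|=\lfloor n/4\rfloor$, and all of $B=V\setminus A$ is controlled by $x$. Then $|E(H)|\sim n/(2\pi\sqrt3(1-p))$ and $\min S_{uv}\gtrsim\frac32p(1-p)n$, and $\frac32p(1-p)>2\cdot\frac{1}{2\pi\sqrt3(1-p)}$ is precisely $\frac{3\sqrt3\pi}{2}p(1-p)^2>1$. The $\sqrt3$ comes from $\sqrt{|A||B|}=\frac{\sqrt3}{4}n$. The factor $(1-p)$ comes from $p\cdot\frac{1}{4\pi p(1-p)\sqrt{|A||B|}}$, not from a trinomial determinant or from endpoint behaviour on $S$.

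\textbf{Your construction fails for every $p$.} Suppose the vertices outside $S$ are spread uniformly over the three parts. Then the $x$-independent part has density $a=(1-\gamma)/3$ and the residual count is $\sim n/(8\pi(1-p)\sqrt{a(1-a)})$. The support on $S$ is only $\sim 2p(1-p)\gamma n$, so Linial--Radhakrishnan needs $8\pi p(1-p)^2\,\gamma\sqrt{a(1-a)}>1$. However, $\max_\gamma \frac{\gamma}{3}\sqrt{(1-\gamma)(2+\gamma)}\approx 0.21$, compared with $\frac{3\sqrt3}{16}\approx0.325$ for the paper's choice. Since also $p(1-p)^2\le\frac{4}{27}$, the left side never exceeds about $0.78$.

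Your fallback of enlarging $S$ does not help within your scheme. As $\gamma\to1$ the uncontrolled part shrinks and the residual count blows up like $a^{-1/2}$.

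\textbf{The fix.} Put \emph{every} vertex outside $S$ into the single uncontrolled part, so that the whole of $B$ is governed by $x$. Then optimize $a(1-a)^3$, which gives $a=1/4$. Your remaining worry is moot: deleting coordinates that vanish in every hyperplane of the minimal subcover leaves every support unchanged.
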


The proof has two conceptually separate parts.  First, we give a
deterministic criterion which guarantees that a suitable refinement of a
fixed two-part partition by splitting one of its parts into two produces a multiset coloring. Second, we show
that this criterion holds with high probability in $G(n,p)$.  The main non-elementary structural ingredient is a lemma of Linial and
Radhakrishnan on essential covers of the Boolean cube.

Before exposing the random graph, fix a deterministic set $A\subseteq V$ with $|A|=\lfloor n/4\rfloor$
and put

$$
 B:=V\setminus A,
 \qquad
 |B|=3n/4+O(1).
$$

\paragraph{Remark.}
The ratio $|A|:|B|=1:3$ is optimal within this hyperplane-cover strategy; see Appendix~\ref{app:half-three-ratio}.

For $C\in\{A,B\}$ and $v\in V$, write

$$
 d_C(v):=|N_G(v)\cap C|.
$$

Define an auxiliary graph $H$ on the same vertex set by
\begin{equation}\label{eq:half-H-def}
uv\in E(H)
\quad\Longleftrightarrow\quad
\Bigl(uv\in E(G),\quad d_A(u)=d_A(v),\quad d_B(u)=d_B(v)\Bigr).
\end{equation}
Thus, if $uv\in E(G)\setminus E(H)$, the initial two-part partition
$(A,B)$ already distinguishes the endpoints in at least one of the two
coordinates $d_A,d_B$. Consequently, after $B$ is refined, only the
edges of $H$ will need to be repaired.

For vertices $x,y\in V$, let
\[
 A_{xy}:=\mathbf1_{\{xy\in E(G)\}},
\]
where $A_{xx}:=0$.
$$
 a^{uv}_w:=A_{uw}-A_{vw}\qquad(w\in B),
$$
$$
 S_{uv}:=|\operatorname{supp}(a^{uv})|
 =|\{w\in B:A_{uw}\ne A_{vw}\}|.
$$

We shall use the following lemma of Linial and
Radhakrishnan~\cite{LinialRadhakrishnan2005}; the same statement is
recorded explicitly as Lemma~13 in
Araujo--Balogh--Mattos~\cite{AraujoBaloghMattos2025}.

\begin{lemma}[Linial--Radhakrishnan]\label{lem:half-LR}
Suppose that $k$ affine hyperplanes

$$
 \langle v_i,x\rangle=\mu_i,
 \qquad i\in[k],
$$

form an \emph{essential cover} of the Boolean cube $\{0,1\}^m$, meaning
that
\begin{enumerate}[label=(\roman*)]
\item every vertex of the cube lies on at least one of the hyperplanes;
\item no hyperplane is redundant, that is, deleting any one of them
leaves some vertex of the cube uncovered;
\item every coordinate $x_j$ occurs with a nonzero coefficient in the
equation of at least one hyperplane.
\end{enumerate}
Then, for every $i$,
\begin{equation}\label{eq:half-LR-support}
|\operatorname{supp}(v_i)|\le2k.
\end{equation}
\end{lemma}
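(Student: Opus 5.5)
Since Lemma~\ref{lem:half-LR} is quoted from \cite{LinialRadhakrishnan2005}, we could simply invoke it. The self-contained argument we would attempt is the polynomial method, applied to one fixed hyperplane $i$. Write $L_j(x)=\langle v_j,x\rangle-\mu_j$.

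\textbf{Step 1: an annihilating identity.} By (i), the product $\prod_{j=1}^k L_j$ vanishes on $\{0,1\}^m$. By (ii), there is a cube point $x^\ast$ covered only by the $i$-th hyperplane. Hence $f:=\prod_{j\ne i}L_j$ satisfies $f(x^\ast)\ne0$. Let $\bar f$ be the multilinear reduction of $f$, obtained by replacing $x_j^2$ with $x_j$. Then $\bar f$ is a nonzero multilinear polynomial of degree $d\le k-1$, and $L_i\bar f\equiv0$ on the cube. In words, $L_i$ vanishes at every cube point where $\bar f$ does not.

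\textbf{Step 2: comparing top-degree coefficients.} Multilinearize $L_i\bar f$; the result must be the zero polynomial. Lower-degree terms of $\bar f$ cannot reach degree $d+1$, and the squares $x_j\cdot x_j$ collapse. So the degree-$(d+1)$ part gives, for every $(d+1)$-set $U$, the linear relation
\[
 \sum_{j\in U} v_{i,j}\,c_{U\setminus\{j\}}=0 ,
\]
where $c_T$ denotes the coefficient of $x_T$ in $\bar f$. Take a top monomial $T$ with $c_T\ne0$ and any $j\in\operatorname{supp}(v_i)\setminus T$, and set $U=T\cup\{j\}$. The relation forces some other top monomial to contain $j$. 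Consequently $\operatorname{supp}(v_i)$ is contained in the union $W$ of the supports of the top-degree monomials of $\bar f$.

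\textbf{Step 3: bounding $|W|$ by $2k$ (the main obstacle).} The top homogeneous part of $f$ is $\prod_{j\ne i}\langle v_j,x\rangle$. After multilinear reduction, however, cancellations can make the top-degree part of $\bar f$ come from several factors simultaneously, so $W$ is not obviously small. A naive restriction argument does not suffice. That argument fixes the coordinates outside one top monomial $T$, notes that the coefficient $c_T$ survives the restriction, and concludes that $\langle v_i,\cdot\rangle$ outside $T$ takes at most $2^{|T|}$ values. Since a linear form with support $s$ takes at least $s+1$ values on the cube, this yields only an exponential bound on $s$.

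To reach the linear bound $2k$, we would first try to choose the witness point $x^\ast$ and the monomial $T$ adaptively. The aim is to find a degree-$\le k-1$ multilinear polynomial whose top-degree part is a single monomial of size at most $k-1$ that cannot be cancelled. We would then use (iii), which guarantees every coordinate is used by some hyperplane, together with the relations of Step 2 to show that each coordinate of $\operatorname{supp}(v_i)$ outside $T$ can be charged to at most one further index. That would give $|\operatorname{supp}(v_i)|\le|T|+(k+1)\le2k$. If this charging fails, we fall back on citing \cite{LinialRadhakrishnan2005} (or Lemma~13 of \cite{AraujoBaloghMattos2025}), since only the statement, not its proof, is needed later.
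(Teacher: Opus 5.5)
The paper gives no proof of this lemma. It quotes it from Linial--Radhakrishnan (Lemma~13 of Araujo--Balogh--Mattos), so your fallback of citing it is exactly what the paper does. Your self-contained attempt, however, has a genuine gap at Step~3, and the route proposed there aims at the wrong quantity.

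Steps~1 and~2 are correct. The trouble is that the set $W$ of Step~2 generally contains the supports of all the other hyperplanes too. Take the standard essential cover of $\{0,1\}^{2t}$ by the hyperplanes $x_{2l-1}=x_{2l}$ ($l\in[t]$) together with $\sum_j x_j=t$. One checks that $W=[m]$ for every choice of $i$. For instance, when $i$ is the first pair-hyperplane, $\bar f=(x_1+x_2-1)\prod_{l\ge2}(x_{2l-1}-x_{2l})$. So proving $|W|\le 2k$ would mean bounding the whole dimension $m$ linearly in $k$. That is far stronger than the lemma, which together with (iii) only yields $m\le 2k^2$. No charging argument based on (iii) will supply it, and in fact (iii) plays no role in the support bound at all.

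The missing idea is to choose a \emph{face} rather than a monomial.

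\begin{itemize}
\item Let $x^\ast$ be your witness from Step~1, $S=\operatorname{supp}(v_i)$, and $\epsilon_l=1-2x^\ast_l$. Flipping coordinate $l$ of $x^\ast$ changes $\langle v_i,x\rangle$ by $\epsilon_l v_{i,l}$.
\item Split $S=S^+\cup S^-$ by the sign of $\epsilon_l v_{i,l}$. For $T\in\{S^+,S^-\}$, let $F_T$ be the face of the cube obtained by freezing every coordinate outside $T$ at its value in $x^\ast$.
\item On $F_T$, $L_i(x)$ is a sum of terms of one strict sign over the coordinates where $x$ differs from $x^\ast$. So the $i$-th hyperplane meets $F_T$ only in $x^\ast$.
\item Hence your $f=\prod_{j\ne i}L_j$ vanishes on $F_T\setminus\{x^\ast\}$ but not at $x^\ast$.
\item By uniqueness of multilinear representations, the multilinear reduction of $f|_{F_T}$ equals $f(x^\ast)\prod_{l\in T}\lambda_l$. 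Here $\lambda_l$ is $1-x_l$ or $x_l$ according as $x^\ast_l$ is $0$ or $1$.
\end{itemize}

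This is precisely the polynomial with a single uncancellable top monomial that you were looking for. Its degree is $|T|$, while the reduction of $f|_{F_T}$ has degree at most $\deg f\le k-1$. So $|T|\le k-1$; this is the Alon--F\"uredi argument. Therefore $|\operatorname{supp}(v_i)|=|S^+|+|S^-|\le 2(k-1)<2k$, with no use of (iii). Condition (iii) only matters when turning such support bounds into lower bounds on $k$.
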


The lemma applies to arbitrary real affine hyperplanes, and hence in
particular to the homogeneous hyperplanes used below.

We can now isolate the deterministic core of the proof.

\begin{lemma}[Hyperplane criterion]
\label{lem:half-hyperplane-criterion}
Suppose that
\begin{equation}\label{eq:half-support-vs-M}
S_{uv}>2|E(H)|
\qquad\text{for every }uv\in E(H).
\end{equation}
Then $B$ can be partitioned into two sets $B_1,B_2$ such that
$(A,B_1,B_2)$ is a multiset coloring of $G$.
\end{lemma}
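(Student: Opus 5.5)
We encode a refinement $B=B_1\cup B_2$ by its indicator vector $x\in\{0,1\}^B$, where $x_w=1$ if and only if $w\in B_1$.
For an edge $uv\in E(G)\setminus E(H)$ the partition $(A,B)$ already gives $d_A(u)\ne d_A(v)$ or $d_B(u)\ne d_B(v)$.
The first condition is untouched by the refinement.
The second forces $d_{B_1}(u)\ne d_{B_1}(v)$ or $d_{B_2}(u)\ne d_{B_2}(v)$, because $d_{B_1}+d_{B_2}=d_B$.
So every edge outside $H$ stays resolved for every choice of $x$, and only the edges of $H$ need attention.

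For $uv\in E(H)$ we have $d_A(u)=d_A(v)$ and $d_B(u)=d_B(v)$.
Hence $\sigma(u)=\sigma(v)$ in the refined partition if and only if $d_{B_1}(u)=d_{B_1}(v)$, since the $B_2$-count then agrees automatically.
This is the condition $\langle a^{uv},x\rangle=0$.
So the edge $uv$ is unresolved exactly when $x$ lies on the homogeneous hyperplane $\mathcal H_{uv}:=\{x:\langle a^{uv},x\rangle=0\}$.
The lemma therefore reduces to a covering statement: it suffices to find $x\in\{0,1\}^B$ lying on none of the hyperplanes $\mathcal H_{uv}$ with $uv\in E(H)$.
Note that $a^{uv}\ne0$, because $S_{uv}>2|E(H)|\ge 0$.

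We argue by contradiction and suppose that these $k:=|E(H)|$ hyperplanes cover $\{0,1\}^B$.
We extract an essential cover so that Lemma~\ref{lem:half-LR} applies.
\begin{enumerate}[label=(\roman*)]
\item Pass to a minimal subfamily $\mathcal F$ that still covers the cube. It has $k'\le k$ members and satisfies conditions (i) and (ii).
\item Condition (iii) may fail. Let $J\subseteq B$ be the set of coordinates that appear with nonzero coefficient in some hyperplane of $\mathcal F$. Restrict to the subcube $\{0,1\}^J$, obtained for instance by fixing the other coordinates to $0$.
\end{enumerate}
Every hyperplane in $\mathcal F$ depends only on the coordinates in $J$, so covering $\{0,1\}^B$ is equivalent to covering $\{0,1\}^J$.
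Minimality transfers as well, since an uncovered point of the full cube projects to an uncovered point of $\{0,1\}^J$.
Every coordinate in $J$ occurs by definition, and $J\neq\emptyset$ because each $a^{uv}$ is nonzero.
Thus $\mathcal F$, viewed as hyperplanes of $\{0,1\}^J$ with normal vectors $a^{uv}|_J$, is an essential cover.

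The support of $a^{uv}$ lies entirely inside $J$ for every member of $\mathcal F$, so $|\operatorname{supp}(a^{uv}|_J)|=S_{uv}$.
Lemma~\ref{lem:half-LR} then gives
\[
S_{uv}\le 2k'\le 2|E(H)|
\]
for any $uv$ in $\mathcal F$, and $\mathcal F$ is nonempty.
This contradicts \eqref{eq:half-support-vs-M}.
Hence some $x$ avoids all the hyperplanes, and the corresponding $B_1=\{w:x_w=1\}$ and $B_2=B\setminus B_1$ give a multiset coloring.

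One formal point concerns emptiness of $B_1$ or $B_2$.
If a part is empty, the coloring has fewer than three nonempty parts, which still witnesses $\chi_m\le3$.
Alternatively, one can note that $x=0$ and $x=\mathbf 1$ lie on every $\mathcal H_{uv}$ (the second because $\sum_w a^{uv}_w=d_B(u)-d_B(v)=0$), so any avoiding $x$ makes both parts nonempty.

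The main obstacle is the reduction to an essential cover, specifically keeping the support bound intact when passing to the subcube.
This works because every hyperplane's support is contained in $J$ by construction, so restricting to $J$ loses nothing.
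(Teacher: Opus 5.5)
Your proof is correct and follows the paper's argument essentially step for step. Like the paper, you encode the refinement by $x\in\{0,1\}^B$, reduce to avoiding the hyperplanes $\langle a^{uv},x\rangle=0$, pass to an inclusion-minimal subcover, discard unused coordinates, and contradict the Linial--Radhakrishnan bound $S_{uv}\le 2k'\le 2|E(H)|$. Your nonemptiness remark also matches the paper's; note that the $0$/$\mathbf 1$ argument needs $H\ne\varnothing$ (as the paper says explicitly), and your first alternative already covers $H=\varnothing$.
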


\begin{proof}
For $x\in\{0,1\}^{B}$ put

$$
 B^x_1=\{w\in B:x_w=1\},
 \qquad
 B^x_2=B\setminus B_1^x.
$$

For $uv\in E(H)$,
\begin{equation}\label{eq:half-dot-product}
\langle a^{uv},x\rangle
=d_{B_1^x}(u)-d_{B_1^x}(v),
\end{equation}
where $\langle ., . \rangle$ is the standard inner product of two vectors. Since $d_B(u)=d_B(v)$ for every $uv\in E(H)$, the edge $uv$ remains
unresolved after the refinement if and only if
\begin{equation}\label{eq:half-bad-iff-hyperplane}
\langle a^{uv},x\rangle=0.
\end{equation}
Thus it suffices to find a point of $\{0,1\}^{B}$ outside all the
hyperplanes

$$
 \mathcal H_{uv}
 =
 \{x\in\mathbb R^{B}:\langle a^{uv},x\rangle=0\},
 \qquad uv\in E(H).
$$

Suppose, to the contrary, that these hyperplanes cover
$\{0,1\}^{B}$. Choose an inclusion-minimal subcover with $k$ members.
Remove every coordinate whose coefficient is zero in every equation
defining a hyperplane of this subcover. After this projection the subcover
remains a cover, giving condition (i) of Lemma~\ref{lem:half-LR}; minimality
gives condition (ii), and deletion of unused coordinates gives condition (iii). Hence, since
$k\le |E(H)|$, Lemma~\ref{lem:half-LR} gives

$$
 |\operatorname{supp}(a^{uv})|
 \le2k\le2|E(H)|
$$

for every hyperplane in the subcover. This contradicts
\eqref{eq:half-support-vs-M}. Hence there exists
\begin{equation}\label{eq:half-good-x}
x\in\{0,1\}^{B}
\qquad\text{such that}\qquad
\langle a^{uv},x\rangle\ne0
\quad\text{for every }uv\in E(H).
\end{equation}

Choose such an $x$, and define $B_1,B_2$ accordingly. Let
$uv\in E(G)$. If $d_A(u)\ne d_A(v)$, then $A$ distinguishes the
endpoints. If $d_A(u)=d_A(v)$ but $d_B(u)\ne d_B(v)$, then at least one
of $B_1,B_2$ distinguishes them, since

$$
 d_B=d_{B_1}+d_{B_2}.
$$

Finally, if

$$
 d_A(u)=d_A(v)
 \qquad\text{and}\qquad
 d_B(u)=d_B(v),
$$

then $uv\in E(H)$ and

$$
 d_{B_1}(u)-d_{B_1}(v)
 =\langle a^{uv},x\rangle\ne0
$$

by \eqref{eq:half-dot-product} and \eqref{eq:half-good-x}. Thus
$(A,B_1,B_2)$ is a multiset coloring.

If nonempty parts are required, note that when $H\ne\varnothing$ both
$0$ and $\mathbf1$ lie on every $\mathcal H_{uv}$, so the chosen point
outside the union is nonconstant and both $B_1$ and $B_2$ are nonempty.
If $H=\varnothing$, the initial partition $(A,B)$ already works and
$B$ may be refined into two nonempty parts.
\end{proof}

The probabilistic part of the argument is summarized in the following
lemma.

\begin{lemma}[Random estimates]\label{lem:half-random-estimates}
With high probability,
\begin{equation}\label{eq:half-M-concentration}
|E(H)|=
\left(
\frac{1}{2\pi\sqrt3\,(1-p)}+o(1)
\right)n,
\end{equation}
and simultaneously
\begin{equation}\label{eq:half-support-bound}
\min_{u\ne v}S_{uv}
\ge
\left(
\frac32p(1-p)-o(1)
\right)n.
\end{equation}
\end{lemma}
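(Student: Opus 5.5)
The support bound \eqref{eq:half-support-bound} is the easy half, so I would dispose of it first. For fixed $u\ne v$, the indicators $\mathbf1\{A_{uw}\ne A_{vw}\}$ for $w\in B\setminus\{u,v\}$ are independent Bernoulli$(2p(1-p))$ variables. Hence $S_{uv}$ has mean $2p(1-p)|B|-O(1)=\bigl(\tfrac32p(1-p)-o(1)\bigr)n$. By Chernoff, $\Prb\bigl(S_{uv}\le(\tfrac32p(1-p)-\varepsilon)n\bigr)\le e^{-c\varepsilon^2 n}$, and a union bound over the $n^2$ pairs gives the claim. Letting $\varepsilon\to0$ slowly yields the $o(1)$ term.

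For \eqref{eq:half-M-concentration} I would compute the first moment of $|E(H)|$ and then use Chebyshev. Fix $u\ne v$ and condition on $A_{uv}=1$, which has probability $p$. Given this, $d_A(u)-d_A(v)=\sum_{w\in A\setminus\{u,v\}}(A_{uw}-A_{vw})$, up to an $O(1)$ correction from whether $u,v\in A$. The summands are i.i.d., take values in $\{-1,0,1\}$, have mean $0$ and variance $2p(1-p)$. The same holds for the sum over $B$, and the $A$-sum and the $B$-sum are independent. The vertex $u$ (resp.\ $v$) counts in $d_C(v)$ (resp.\ $d_C(u)$) exactly when it lies in $C$. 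This contributes a deterministic shift of $\pm1$ in the coordinate containing the endpoint, and when $u,v$ lie in the same part the shifts cancel.

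By the local central limit theorem for lattice walks (the step distribution is aperiodic on $\mathbb Z$), $\Prb(X_m=s)=(1+o(1))/\sqrt{2\pi\cdot 2p(1-p)m}$ uniformly for $s=O(1)$. The probability that both differences vanish is therefore
\[
\frac{1+o(1)}{2\pi\cdot2p(1-p)\sqrt{|A||B|}}=\frac{1+o(1)}{2\pi\cdot 2p(1-p)\cdot\frac{\sqrt3}{4}n}=\frac{1+o(1)}{\sqrt3\,\pi\, p(1-p)\,n}.
\]
Multiplying by $p\binom n2$ gives $\E|E(H)|=\bigl(\tfrac{1}{2\sqrt3\pi(1-p)}+o(1)\bigr)n$, which matches the claim.

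The main obstacle is concentration. $|E(H)|=\sum_{uv}Y_{uv}$ with $\E Y_{uv}=\Theta(1/n)$, and I would bound $\operatorname{Var}|E(H)|=o(n^2)$. Pairs sharing no vertex, or sharing one vertex, are only weakly dependent: $Y_{uv}$ and $Y_{u'v'}$ share just $O(1)$ edge variables directly. I would condition on those few shared entries (the edges among $\{u,v,u',v'\}$) and note that the remaining four walk sums for the two pairs are then functions of disjoint sets of independent entries, except through common third vertices $w$. This overlap is exactly the problem: $d_A(u)-d_A(v)$ and $d_A(u)-d_A(v')$ share the $A_{uw}$ terms. To handle it, I would treat the joint vector $(d_A(u)-d_A(v),\,d_B(u)-d_B(v),\,d_A(u')-d_A(v'),\,d_B(u')-d_B(v'))$ with a four-dimensional local CLT. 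For pairs sharing one vertex, the covariance between the $u$--$v$ and $u$--$v'$ differences is $p(1-p)$ per coordinate, so the joint covariance matrix is nondegenerate. The local CLT then gives $\Prb(Y_{uv}=Y_{uv'}=1)=O(1/n^2)$. There are $O(n^3)$ such pairs, contributing $O(n)$. For disjoint pairs the covariance matrix is diagonal up to $O(1)$ perturbations, so $\Prb(Y_{uv}=Y_{u'v'}=1)=\Prb(Y_{uv}=1)\Prb(Y_{u'v'}=1)(1+o(1))$, uniformly. These contribute $o(n^4\cdot n^{-4})\cdot n^0$, i.e.\ $o(1)\cdot(\E|E(H)|)^2=o(n^2)$. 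The diagonal terms add $O(n)$. Chebyshev then gives $|E(H)|=(1+o(1))\E|E(H)|$ with high probability.

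The delicate point is the uniform multidimensional local limit theorem with an explicit $1+o(1)$ error for the disjoint case. If that proves awkward, I would instead get concentration via a bounded-differences argument on a truncated count, or reveal the rows of $u,v$ and use McDiarmid on the vertex exposure. I expect the local-CLT route to be the cleanest.
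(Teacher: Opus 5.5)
Your proposal is correct and follows essentially the paper's route: Chernoff plus a union bound for the support estimate, the one-dimensional lattice local CLT for $\E|E(H)|$, and Chebyshev after splitting the variance into diagonal terms, pairs sharing an endpoint ($O(n^{-2})$ each), and vertex-disjoint pairs ($o(n^{-2})$ each, so $O(n^4)\cdot o(n^{-2})=o(n^2)$ in total; your displayed count is garbled but the conclusion is right). The paper simply avoids any multidimensional local limit theorem. For disjoint pairs, conditioning on the four cross edges makes the two indicators exactly independent, so your ``delicate point'' reduces to the one-dimensional estimate. For pairs sharing $u$, it uses independent binomials $X,Y,Z$ and the bound $\Prb(X-Y=r,\,X-Z=s)\le\sup_j\Prb(Y=j)\,\sup_j\Prb(Z=j)=O(n^{-1})$.
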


We first finish the proof of the theorem from these two estimates.

{
\renewcommand{\proofname}{Proof of Theorem~\ref{thm:half-three}}
\begin{proof}
Put

$$
 c_p:=\frac{1}{2\pi\sqrt3\,(1-p)}.
$$

By Lemma~\ref{lem:half-random-estimates},

$$
 |E(H)|=(c_p+o(1))n,
$$

while, simultaneously for every $uv\in E(H)$,

$$
 S_{uv}
 \ge
 \left(
 \frac32p(1-p)-o(1)
 \right)n.
$$
The decisive strict inequality is
\begin{equation}\label{eq:half-numerical-gap}
 \frac{3}{2}p(1-p)-2c_p
 =
 \frac{3}{2}p(1-p)
 -
 \frac{1}{\pi\sqrt{3}\,(1-p)}
 >0,
\end{equation}
where the last inequality is precisely the hypothesis of the theorem.
Since the gap in \eqref{eq:half-numerical-gap} is a fixed positive
constant, the $o(1)$ terms above are negligible for all sufficiently
large $n$.
Therefore by Lemma \ref{lem:half-random-estimates}, with high probability,
\[
 S_{uv}>2|E(H)|
 \qquad\text{for every }uv\in E(H).
\]
Lemma~\ref{lem:half-hyperplane-criterion} therefore gives a partition
$(A,B_1,B_2)$ which is a multiset coloring of $G$. Hence
\[
 \Prb\bigl(\chi_m(G)\le3\bigr)\longrightarrow1.
\]

\end{proof}
}

\subsection*{Proof of the random estimates}

We now prove Lemma~\ref{lem:half-random-estimates}. The calculation of
the number of unresolved edges uses the following local collision
estimate.

\begin{lemma}\label{lem:half-qm}
Let $X_1,\ldots,X_m,Y_1,\ldots,Y_m$ be independent
Bernoulli random variables with mean $p$ and put

$$
 S_m:=\sum_{i=1}^m(X_i-Y_i),
 \qquad
 q_m(t):=\Prb(S_m=t).
$$
For every fixed integer $t$,
\begin{equation}\label{eq:half-qm-asymp}
q_m(t)
=
\frac{1+o(1)}{\sqrt{4\pi m p(1-p)}}.
\end{equation}
\end{lemma}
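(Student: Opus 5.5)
The plan is to prove the local limit theorem \eqref{eq:half-qm-asymp} by Fourier inversion on $\mathbb Z$. Each summand $Z_i:=X_i-Y_i$ is i.i.d. on $\{-1,0,1\}$, with
\[
 \Prb(Z_i=\pm1)=p(1-p),\qquad \Prb(Z_i=0)=p^2+(1-p)^2 .
\]
Its mean is $0$ and its variance is $\sigma^2:=2p(1-p)$. The characteristic function is real and even:
\[
 \varphi(\theta)=\E e^{i\theta Z_1}=1-2p(1-p)(1-\cos\theta)=1-4p(1-p)\sin^2(\theta/2).
\]
By inversion,
\[
 q_m(t)=\frac1{2\pi}\int_{-\pi}^{\pi}\varphi(\theta)^m e^{-it\theta}\,d\theta .
\]
The target value $1/\sqrt{4\pi m p(1-p)}=1/\sqrt{2\pi m\sigma^2}$ is exactly the Gaussian density at $0$. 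One could simply cite the standard lattice local CLT, since $Z_1$ has span $1$: it has support containing $0$ and $1$, so its values generate $\mathbb Z$. I would nevertheless sketch the direct argument to keep the proof self-contained.

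First, $\varphi$ is bounded away from $1$ in absolute value off a neighbourhood of $0$. Indeed $4p(1-p)\le1$, so $\varphi(\theta)\in[-1,1]$, with $\varphi=1$ only at $\theta=0$. Also $\varphi=-1$ would require $4p(1-p)=1$ and $\theta=\pm\pi$, that is, $p=1/2$. This is the one step needing care. In that case $\varphi(\theta)=\cos\theta$, so $Z_1$ has zero mass at $0$? No: at $p=1/2$ we get $\Prb(Z_1=0)=1/2$ and $\varphi(\theta)=\tfrac12(1+\cos\theta)=\cos^2(\theta/2)$, which does not equal $-1$. Checking the general formula directly: $1-4p(1-p)\sin^2(\theta/2)\ge 1-4p(1-p)\ge0$. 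So $\varphi\ge0$ throughout and there is no aperiodicity issue. Hence for each fixed $\delta>0$ there is $\eta<1$ with $|\varphi(\theta)|\le\eta$ for $\delta\le|\theta|\le\pi$. That range then contributes $O(\eta^m)=o(m^{-1/2})$.

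Second, the main range is $|\theta|\le\delta$. Substitute $\theta=s/\sqrt m$. Since $\log\varphi(\theta)=-\tfrac{\sigma^2}{2}\theta^2+O(\theta^4)$, we get $\varphi(s/\sqrt m)^m\to e^{-\sigma^2 s^2/2}$ pointwise, and $e^{-its/\sqrt m}\to1$ because $t$ is fixed. For domination, choose $\delta$ small enough that $\varphi(\theta)\le e^{-\sigma^2\theta^2/4}$ on $|\theta|\le\delta$. This gives the integrable bound $e^{-\sigma^2 s^2/4}$. Dominated convergence then yields
\[
 \frac{\sqrt m}{2\pi}\int_{|\theta|\le\delta}\varphi(\theta)^m e^{-it\theta}\,d\theta
 \longrightarrow\frac1{2\pi}\int_{\mathbb R}e^{-\sigma^2s^2/2}\,ds=\frac1{\sqrt{2\pi\sigma^2}} .
\]
Combining the two ranges gives $q_m(t)=(1+o(1))/\sqrt{2\pi m\sigma^2}=(1+o(1))/\sqrt{4\pi mp(1-p)}$.

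The main (mild) obstacle is the domination step near $0$. It follows from $1-x\le e^{-x}$ together with $\sin^2(\theta/2)\ge\theta^2/\pi^2$ for $|\theta|\le\pi$. These give $\varphi(\theta)\le\exp(-4p(1-p)\theta^2/\pi^2)$ on all of $[-\pi,\pi]$, which in fact handles both ranges at once. Uniformity in $t$ is not needed, since $t$ is fixed. The error term $|e^{-its/\sqrt m}-1|\le|t||s|/\sqrt m$ is also integrable against the Gaussian bound.
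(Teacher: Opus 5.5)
Your proof is correct and is essentially the paper's argument: the paper identifies $Z_i=X_i-Y_i$ as i.i.d.\ lattice variables with mean $0$, variance $2p(1-p)$ and span $1$ and simply cites the lattice local CLT, whereas you prove that theorem inline by Fourier inversion, using the global dominating bound $0\le\varphi(\theta)\le\exp(-4p(1-p)\theta^2/\pi^2)$ on $[-\pi,\pi]$. The one blemish is the muddled aside about $\varphi=-1$: the conditions you list actually give $\varphi=0$, and $\varphi=-1$ is impossible since $4p(1-p)\le1$, so replace that aside with the one-line observation $\varphi\ge1-4p(1-p)\ge0$.
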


\begin{proof}
Put $Z_i=X_i-Y_i$. Then the $Z_i$ are i.i.d.\ lattice random variables
with mean $0$, variance $2p(1-p)$, and maximal span $1$, since their
support is $\{-1,0,1\}$. 
The lattice local central limit theorem (see, e.g.,~\cite{Petrov1975})
states that if $Z_1,Z_2,\ldots$ are i.i.d.\ integer-valued random
variables with mean $0$, variance $\sigma^2>0$, and maximal span $1$,
then, for every fixed $t\in\mathbb Z$,
\[
 \sqrt m\,\Prb\!\left(\sum_{i=1}^m Z_i=t\right)
 \longrightarrow \frac{1}{\sqrt{2\pi\sigma^2}}.
\]
Here $\sigma^2=2p(1-p)$, and hence
\[
 q_m(t)
 =
 \frac{1+o(1)}{\sqrt{4\pi m p(1-p)}}.
\]
This is equivalent to \eqref{eq:half-qm-asymp}.
\end{proof}

We first establish the estimate for $|E(H)|$. For every unordered pair
$\{u,v\}$, define

$$
 I_{uv}=
\begin{cases}
1, & uv\in E(H),\\
0, & uv\notin E(H),
\end{cases},
 \qquad\qquad
 |E(H)|=\sum_{u<v}I_{uv}
$$
Uniformly over distinct $u,v$,
\begin{equation}\label{eq:half-one-edge}
\Prb(I_{uv}=1)
=
\frac{1+o(1)}{\pi\sqrt{3}\,(1-p)n}.
\end{equation}
Indeed, condition on $uv\in E(G)$, which has probability $p$.
For $C\in\{A,B\}$,
$$
 d_C(u)-d_C(v)
 =
 \sum_{w\in C\setminus\{u,v\}}
 (A_{uw}-A_{vw})
 +
 A_{uv}
 \bigl(
 \mathbf1_{\{v\in C\}}
 -
 \mathbf1_{\{u\in C\}}
 \bigr).
$$
After conditioning on $A_{uv}=1$, the last term is a fixed element of
$\{-1,0,1\}$. Thus $d_C(u)=d_C(v)$ asks a sum from
Lemma~\ref{lem:half-qm} to hit a bounded target. The numbers of summands
are

$$
 m_A=n/4+O(1),
 \qquad
 m_B=3n/4+O(1),
$$

and the edge variables used in the two parts are disjoint. Hence
\[
\begin{aligned}
 \Prb(I_{uv}=1)
 &=
 p\,
 q_{m_A}\!\left(
 \mathbf1_{\{u\in A\}}-\mathbf1_{\{v\in A\}}
 \right)
 q_{m_B}\!\left(
 \mathbf1_{\{u\in B\}}-\mathbf1_{\{v\in B\}}
 \right)
 \\
 &=
 \frac{1}{4\pi(1-p)\sqrt{m_A m_B}}
 \bigl(1+o(1)\bigr)
 \\
 &=
 \frac{1+o(1)}{\pi\sqrt3\,(1-p)n}.
\end{aligned}
\]
which proves \eqref{eq:half-one-edge}. Summing over the
$\binom n2$ unordered pairs gives

$$
 \E|E(H)|
 =
 \left(
 \frac{1}{2\pi\sqrt3\,(1-p)}
 +o(1)
 \right)n.
$$

For the variance, it is enough to prove
$\operatorname{Var}(|E(H)|)=o(n^2)$. The diagonal contribution is $O(n)$.
Pairs sharing one endpoint have covariance $O(n^{-2})$ by conditioning
on their three endpoints and using the $O(n^{-1/2})$ maximum binomial
point mass, so their total contribution is $O(n)$. 
Indeed, after the three endpoints are exposed, in each of $A$ and $B$
the two required degree equalities involve three independent binomial
variables, and their joint probability is $O(n^{-1})$.
For four distinct
endpoints, expose the four cross edges. Conditional on them, the two
indicators are independent, while Lemma~\ref{lem:half-qm} gives each
conditional probability as
\[
 \frac{1+o(1)}{\pi\sqrt3\,(1-p)n}
\]
uniformly over the finitely many exposed configurations. Hence each such
covariance is $o(n^{-2})$, and their total contribution is $o(n^2)$.
Thus $\operatorname{Var}(|E(H)|)=o(n^2)$. Chebyshev's inequality now gives

$$
 |E(H)|=
 \left(
 \frac{1}{2\pi\sqrt3\,(1-p)}
 +o(1)
 \right)n
$$

with high probability.

It remains to prove the support estimate. The important point is that
we estimate $S_{uv}$ simultaneously for all pairs $u,v$, without
conditioning on $uv\in E(H)$. Thus no change of distribution caused by
the condition $d_B(u)=d_B(v)$ enters the argument.

For fixed $u\ne v$ and every $w\in B\setminus\{u,v\}$,

$$
 \Prb(A_{uw}\ne A_{vw})
 =
 2p(1-p),
$$

independently over $w$. Hence

\[
 S_{uv}
 \sim
 \operatorname{Binomial}
 \bigl(
 |B\setminus\{u,v\}|,
 2p(1-p)
 \bigr)
 +O(1).
\]

Since $|B|=3n/4+O(1)$, the binomial term has mean

$$
 \left(
 \frac32p(1-p)+o(1)
 \right)n.
$$

Consequently, taking $\varepsilon=n^{-1/4}$, Chernoff's inequality gives

$$
 \Prb\!\left(
 S_{uv}
 <
 \left(
 \frac32p(1-p)-n^{-1/4}
 \right)n
 \right)
 \le
 e^{-\Omega_p(n^{1/2})}.
$$

A union bound over all fewer than $n^2/2$ pairs gives, with high
probability,

$$
 S_{uv}
 \ge
 \left(
 \frac32p(1-p)-n^{-1/4}
 \right)n
$$

simultaneously for all distinct $u,v$. Therefore

$$
 \min_{u\ne v}S_{uv}
 \ge
 \left(
 \frac32p(1-p)-o(1)
 \right)n
$$

with high probability. This proves Lemma~\ref{lem:half-random-estimates}.

By starting from the partition $(A_1,A_2,B)$ instead of $(A,B)$, we obtain the upper bound $4$ for every $p\in(0,1)$.

\begin{theorem}\label{thm:fixed-four}
For every fixed $p\in(0,1)$ and $G\sim G(n,p)$,
$\Prb(\chi_m(G)\le4)\to1$.
\end{theorem}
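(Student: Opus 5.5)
The plan is to rerun the proof of Theorem~\ref{thm:half-three} with the two-part starting partition $(A,B)$ replaced by a fixed three-part partition $(A_1,A_2,B)$. The point is that one extra initial coordinate pushes the number of unresolved edges from linear down to sublinear, so the numerical condition on $p$ is no longer needed. Before exposing $G$, fix disjoint $A_1,A_2,B$ with $|A_1|,|A_2|,|B|=n/3+O(1)$ (any three linear sizes would do). Define $H'$ by $uv\in E(H')$ iff $uv\in E(G)$ and $d_{A_1},d_{A_2},d_B$ all agree on $u,v$. Define $a^{uv}$ and $S_{uv}$ exactly as before, with this new $B$.

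First, the deterministic step is Lemma~\ref{lem:half-hyperplane-criterion} verbatim, with $(A,B)$ replaced by $(A_1,A_2,B)$ and $H$ by $H'$. Its proof never used that the untouched part was a single set. An edge not in $H'$ is already distinguished by $A_1$ or $A_2$, or has $d_B(u)\ne d_B(v)$, in which case one of $B_1,B_2$ distinguishes it. For edges of $H'$ we need $\langle a^{uv},x\rangle\neq0$. The Linial--Radhakrishnan argument (Lemma~\ref{lem:half-LR}) then shows that $S_{uv}>2|E(H')|$ for all $uv\in E(H')$ suffices to find a good $x$, yielding the multiset coloring $(A_1,A_2,B_1,B_2)$. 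Nonemptiness of $B_1,B_2$ follows as before.

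Second, I would bound $|E(H')|$. As in the proof of \eqref{eq:half-one-edge}, condition on $A_{uv}=1$. The three differences $d_C(u)-d_C(v)$ for $C\in\{A_1,A_2,B\}$ are then independent sums over disjoint edge sets, each asked to hit a bounded target. By Lemma~\ref{lem:half-qm}, or simply the $O(m^{-1/2})$ point-mass bound, each probability is $O(n^{-1/2})$. Hence $\Prb(uv\in E(H'))=O(n^{-3/2})$ uniformly in $u,v$, and $\E|E(H')|=O(n^{1/2})$. Markov's inequality gives $|E(H')|\le n^{3/4}$ with high probability, with no variance computation needed.

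Third, the support bound is unchanged. For fixed $u\ne v$, $S_{uv}$ is $\operatorname{Binomial}(|B|-O(1),2p(1-p))+O(1)$. Chernoff plus a union bound over pairs gives $\min_{u\ne v}S_{uv}\ge (\tfrac23p(1-p)-o(1))n$ with high probability. Since $(\tfrac23p(1-p)-o(1))n>2n^{3/4}$ for large $n$ and every fixed $p\in(0,1)$, the criterion holds with high probability, proving the theorem.

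The only step needing care is the first: checking that the criterion survives replacing $A$ by two parts. In particular, the projection and minimal-subcover argument must still apply when $H'$ is tiny, and the case $H'=\varnothing$ must be handled as before. I expect no real obstacle, since the probabilistic estimates are cruder than in the three-part case.
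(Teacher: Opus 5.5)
Your proposal is correct and follows the paper's proof essentially step for step. The steps are the same: a fixed partition $(A_1,A_2,B)$, the $O(n^{-3/2})$ bound on $\Prb(uv\in E(H'))$ from three independent collision events, Markov to get $|E(H')|=o(n)$, Chernoff with a union bound for linear support on $B$, and the hyperplane criterion applied verbatim. The only differences are cosmetic: you take all three parts of size $n/3$ rather than the paper's $n/4,n/4,n/2$, and you make the Markov threshold explicit as $n^{3/4}$.
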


\begin{proof}
Partition $V=A_1\cup A_2\cup B$ with $|A_1|=|A_2|=n/4+O(1)$ and
$|B|=n/2+O(1)$, and let $H$ consist of the edges unresolved by
$(A_1,A_2,B)$. Conditioning on $uv\in E(G)$, the three degree collisions
have probability $O(n^{-1/2})$ each and use disjoint remaining edge variables,
so $\Prb(uv\in E(H))=O(n^{-3/2})$. Hence $\E|E(H)|=O(n^{1/2})$ and,
by Markov's inequality, $|E(H)|=o(n)$ with high probability. Meanwhile, Chernoff's inequality and a
union bound give, simultaneously for all $u\ne v$,
$|\{w\in B:A_{uw}\ne A_{vw}\}|\ge(p(1-p)-o(1))n$.
Thus the support exceeds $2|E(H)|$ on every edge of $H$, and the proof of
Lemma~\ref{lem:half-hyperplane-criterion} applies verbatim, with $A_1,A_2$
as the fixed parts, to split $B$ into $B_1,B_2$ and obtain the multiset
coloring $(A_1,A_2,B_1,B_2)$.
\end{proof}

\section{The near-complete regime}\label{sec:near}

We now turn to the near-complete regime and determine how the multiset chromatic number grows as the graph approaches completeness.

\begin{theorem}\label{thm:near}
Fix $\beta\in(0,1)$ and let $G\sim G\!\left(n,1-n^{-(1-\beta)}\right)$.

Then, with high probability,
\[
 \frac{2}{\beta}\le \chi_m(G)
 \le \left\lfloor\frac{2}{\beta}\right\rfloor+5.
\]
Consequently, $ \chi_m(G)=\frac{2}{\beta}+O(1)$.
\end{theorem}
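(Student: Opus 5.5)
We prove the two bounds separately. Write $q:=1-p=n^{-(1-\beta)}$, so the complement $\bar G\sim G(n,q)$ has average degree $nq=n^{\beta}$.

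\emph{Lower bound.} Fix a multiset coloring with $k$ parts $P_1,\dots,P_k$. For any vertex $v$ and part $P_i$ we have $|N_G(v)\cap P_i|=|P_i|-\1_{\{v\in P_i\}}-|N_{\bar G}(v)\cap P_i|$. Hence the signature of $v$ is determined by its own part together with the vector $(|N_{\bar G}(v)\cap P_i|)_i$.

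W.h.p.\ every vertex has $\bar G$-degree at most $(1+o(1))n^{\beta}$. So each vertex's complement-signature is a vector of $k$ nonnegative integers summing to at most $(1+o(1))n^\beta$. The number of such vectors is at most $\binom{(1+o(1))n^\beta+k}{k}=O(n^{\beta k})$, and the number of distinct signatures is $O(k\,n^{\beta k})$.

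Adjacent vertices must have distinct signatures, so each signature class is an independent set of $G$, i.e.\ a clique of $\bar G$. W.h.p.\ the clique number of $G(n,q)$ is bounded, since $q=n^{-(1-\beta)}$ and cliques of size $s$ have expected count $n^s q^{\binom s2}\to0$ once $s>2/(1-\beta)+1$. A constant clique bound therefore gives $n\le O(k\,n^{\beta k})$. This yields only $k\ge 1/\beta-o(1)$, which is off by a factor of $2$.

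To reach $2/\beta$ we need the spectral input the abstract mentions. Consider the $n\times k$ matrix $M$ with $M_{vi}=|N_{\bar G}(v)\cap P_i|$, that is, $M=\bar A\,\Pi$ with $\Pi$ the part-indicator matrix. Then $\|M\|_F^2=\mathbf 1^{\!\top}\Pi^{\top}\bar A^2\Pi\mathbf 1$-type sums are controlled by $\lambda_{\max}(\bar A^2)$. For the centered matrix, $\|\bar A-q J\|=O(\sqrt{nq})=O(n^{\beta/2})$ w.h.p.

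Apart from the rank-one part $qJ\Pi$, which depends only on part sizes, each row of $M$ therefore lies in a set with $\sum_v\|\tilde M_v\|^2\le O(n^{\beta})\cdot\sum_i|P_i|=O(n^{1+\beta})$. So a typical row has norm $O(n^{\beta/2})$. A counting argument then finishes: all but $o(n)$ rows lie in a ball of radius $n^{\beta/2}\omega$ in $\mathbb Z^k$, which contains $O((n^{\beta/2}\omega)^k)$ lattice points. Combined with the bounded clique number, this forces $n^{\beta k/2+o(1)}\ge n$, i.e.\ $k\ge 2/\beta$. The plan is to make this precise with Markov on row norms, choosing $\omega=n^{\varepsilon}$ and letting $\varepsilon\to0$.

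\emph{Upper bound.} Let $k=\lfloor 2/\beta\rfloor+5$ and choose a uniformly random partition into $k$ parts of sizes $\approx n/k$. For an edge $uv$ of $G$, the vector $\sigma(u)-\sigma(v)$ is, up to an $O(1)$ shift, the difference of complement-neighbor count vectors. Typically $|N_{\bar G}(u)\triangle N_{\bar G}(v)|\approx 2n^\beta$, and these vertices are multinomially distributed among the $k$ parts.

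Multinomial anti-concentration gives $\Prb(\sigma(u)=\sigma(v))\le O(n^{-\beta(k-1)/2})$. This is $\ll n^{-1}$ times polynomial slack once $k-1>2/\beta$ with margin. A plain union bound over the $\sim n^2$ edges would need $\beta(k-1)/2>2$. Instead we use the Local Lemma with exposure of the coloring only on the symmetric differences. Each bad event $E_{uv}$ depends on the colors of $O(n^\beta)$ vertices, and each vertex lies in the symmetric difference of $O(n^{1+\beta})$ pairs. So the dependency degree is $D=O(n^{1+2\beta})$, and we need $n^{-\beta(k-1)/2}\cdot n^{1+2\beta}\to0$, i.e.\ $k-1>2/\beta+4$. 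This is exactly accommodated by $\lfloor 2/\beta\rfloor+5$ with the $+$ margin.

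We also handle the $O(1)$ shifts from $u,v$'s own parts, which bounded targets absorb, and the w.h.p.\ structural properties: codegree bounds in $\bar G$ of $O(1)$, giving symmetric differences of size $(2-o(1))n^\beta$, and the degree concentration.

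\emph{Main obstacle.} The hardest step is the lower bound's spectral counting, which must upgrade the naive $1/\beta$ to $2/\beta$. In particular, handling vertices whose rows are atypically large, and using the bounded independence number of signature classes, needs care.
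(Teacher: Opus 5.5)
Your lower bound is sound and is essentially the paper's argument. You centre the complement adjacency matrix and bound $\sum_i\|(\bar A-qJ)x_i\|_2^2\le\|\bar A-qJ\|^2n=O(n^{1+\beta})$. You then discard the few vertices with large rows, count lattice points, and use that signature classes are cliques of $\bar G$ of bounded size. The paper uses closed neighbourhoods, so your extra own-part factor $k$ disappears, and it uses a constant-factor cut instead of $\omega=n^{\varepsilon}$; both differences are cosmetic.

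The upper bound, however, does not reach $\lfloor 2/\beta\rfloor+5$. You have $D=O(n^{1+2\beta})$ and $\Prb(E_{uv})=O(n^{-\beta(k-1)/2})$, so the Local Lemma needs $\beta(k-1)/2>1+2\beta$, i.e.\ $k>2/\beta+5$. But $k=\lfloor 2/\beta\rfloor+5\le 2/\beta+5$, so $k-1>2/\beta+4$ never holds. When $2/\beta$ is an integer the exponent of $e\,\Prb(E_{uv})(D+1)$ is exactly $0$, which gives only $O(1)$ rather than $\le 1$. Otherwise the exponent is positive. The claim that this is ``exactly accommodated'' is an arithmetic slip, and as written your argument yields $\lfloor 2/\beta\rfloor+6$.

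The missing ingredient is a sharper dependency count. First, $E_{uv}$ is impossible unless $d_{\bar G}(u)=d_{\bar G}(v)$, since the coordinate sums of the two signatures are $n-1-d_{\bar G}(\cdot)$. So bad events are needed only for equal-degree pairs. Second, w.h.p.\ every degree class of $\bar G$ has size $O(n^{1-\beta/2})$. The degree is $\operatorname{Bin}(n-1,q)$ with maximum point mass $O(n^{-\beta/2})$. Splitting $V$ into two halves and conditioning on the graph inside one half makes the cross-degree contributions independent, so Chernoff applies. Now a vertex $w$ lies in $N_{\bar G}[x]\triangle N_{\bar G}[y]$ for only $O(n^{\beta})\cdot O(n^{1-\beta/2})=O(n^{1+\beta/2})$ relevant pairs: one endpoint lies in $N_{\bar G}[w]$ and the other in the same degree class. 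This gives $D=O(n^{1+3\beta/2})$, and the LLL condition becomes $k>2/\beta+4$, which $\lfloor 2/\beta\rfloor+5$ satisfies; this is how the paper argues.

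Two smaller corrections. The colouring must be independent and uniform, because a random equipartition destroys both the multinomial law and the independence the dependency graph requires. Also, codegrees in $\bar G$ are of order $n^{2\beta-1}$, which is not $O(1)$ for $\beta>1/2$. What you actually need, $|N_{\bar G}[u]\triangle N_{\bar G}[v]|=(2+o(1))n^{\beta}$ uniformly over pairs, follows directly from Chernoff and a union bound.
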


For the proof, put
\[
 H=\overline G\sim G(n,r),\qquad
 r=n^{-(1-\beta)}.
\]
For any partition $V=P_1\cup\cdots\cup P_q$ and every $v\in V$, writing
$N_H[v]:=N_H(v)\cup\{v\}$ for the closed neighborhood in $H$,
\begin{equation}\label{eq:complement-signature}
 |N_G(v)\cap P_i|
 =|P_i|-|N_H[v]\cap P_i|.
\end{equation}
Thus two adjacent vertices of $G$ have the same signature exactly when their closed-neighborhood color-count vectors in $H$ are equal. In particular, vertices of unequal $H$-degree are automatically distinguished.

We use the following lemmas in the proof of Theorem~\ref{thm:near}.

Let $A_H$ be the adjacency matrix of $H$, and let $J$ and $I$ denote
the all-ones and identity matrices. Then $\mathbb E[A_H]=r(J-I)$.
Write $\|M\|=\|M\|_{2\to2}$ for the spectral norm of a real symmetric
matrix $M$.

\begin{lemma}[Lu--Peng spectral estimate. \cite{LuPeng2013}]\label{lem:lu-peng}
Let $H\sim G(n,r)$ and suppose that \((n-1)r\gg(\log n)^4\).
Then, with high probability,
$$
\|A_H-r(J-I)\|=O\!\left(\sqrt{(n-1)r}\right).
$$
\end{lemma}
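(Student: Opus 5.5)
Since Lemma~\ref{lem:lu-peng} is quoted from~\cite{LuPeng2013}, the plan below only sketches how one would establish it directly, by the trace (moment) method in the style of F\"uredi--Koml\'os and Vu. Put $M:=A_H-r(J-I)$. This is a symmetric matrix with zero diagonal whose above-diagonal entries $M_{ij}$ are independent, centered, bounded by $1$ in absolute value, and satisfy $\E M_{ij}^2=r(1-r)\le r$. More generally $\E|M_{ij}|^s\le r$ for every $s\ge2$. For an even power $2k$ we have $\|M\|^{2k}\le\operatorname{tr}M^{2k}$. By Markov's inequality it therefore suffices to show that $\E\operatorname{tr}M^{2k}\le n\,\bigl(C\sqrt{nr}\bigr)^{2k}$ for some $k$ with $k\gg\log n$. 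Then $n^{1/(2k)}=1+o(1)$, and $\|M\|\le 2C\sqrt{nr}$ with high probability.

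Next expand $\E\operatorname{tr}M^{2k}$ as a sum over closed walks $i_0i_1\cdots i_{2k}=i_0$ in $K_n$ of $\E\prod_t M_{i_ti_{t+1}}$. By independence and centering, only walks that traverse every edge at least twice contribute. Classify each walk by the number $\ell$ of distinct edges it uses and the multiplicities $s_1,\dots,s_\ell\ge2$ of those edges, with $\sum s_j=2k$. Its contribution is at most $r^{\ell}$, and it visits at most $\ell+1$ distinct vertices, giving at most $n^{\ell+1}$ vertex labelings. The dominant terms are the tree-like walks with $\ell=k$ and all $s_j=2$. There are $\mathrm{Cat}_k\,n^{k+1}\le 4^k n^{k+1}$ of them, contributing at most $n(4nr)^k$, which is exactly the target.

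The main obstacle is controlling walks with $\ell<k$, where some edges are traversed more than twice or cycles occur. Each lost distinct edge trades a factor $nr$ for combinatorial freedom in the walk's shape. I would use Vu's coding argument: encode a walk by its first-visit tree together with, at each non-innovative step, the choice of return vertex. This bounds the number of shapes with $\ell$ distinct edges by roughly $\binom{2k}{2\ell}\,\mathrm{Cat}_\ell\,(C\ell)^{2(k-\ell)}$. The terms with $\ell<k$ then sum to at most
\[
 n(4nr)^k\sum_{j\ge1}\Bigl(\frac{C k^4}{nr}\Bigr)^{j}.
\]
Choosing $k$ slightly larger than $\log n$, for example $k=(\log n)\,\omega$ with $\omega\to\infty$ slowly, the ratio $Ck^4/(nr)$ tends to $0$ precisely because of the hypothesis $(n-1)r\gg(\log n)^4$. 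The error is therefore $o\bigl(n(4nr)^k\bigr)$, and the bound $\|A_H-r(J-I)\|=O(\sqrt{(n-1)r})$ follows.
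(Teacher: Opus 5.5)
The paper gives no proof of this lemma: it is imported from Lu--Peng, and for $G(n,r)$ it is the F\"uredi--Koml\'os/Vu trace-method bound, which is the route you outline. Most of your reductions are sound:
\begin{itemize}
\item $\|M\|^{2k}\le\operatorname{tr}M^{2k}$, combined with Markov's inequality and $k\gg\log n$;
\item the moment bound $\E|M_{ij}|^s\le r$ for $s\ge2$;
\item the restriction to closed walks that use every edge at least twice;
\item the main term $\mathrm{Cat}_k n^{k+1}r^k\le n(4nr)^k$;
\item the final numerology: a loss of order $k^4/(nr)$ per unit of defect is exactly what makes $(\log n)^4$ the threshold.
\end{itemize}

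The gap is in the one step that carries all the weight, the shape count. You claim at most $\binom{2k}{2\ell}\mathrm{Cat}_\ell(C\ell)^{2(k-\ell)}$ shapes with $\ell$ distinct edges, each with at most $n^{\ell+1}$ labelings. This is valid only for walks whose underlying graph is a tree. At $\ell=k$ it asserts that the only closed walks of length $2k$ using $k$ edges twice each are the $\mathrm{Cat}_k$ tree walks. But a $k$-cycle traversed twice is another such walk.

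The encoding as you describe it also does not give your bound. Recording the return vertex at each non-innovative step costs a factor up to $\ell+1$ at each of the $2k-\ell\ge k$ non-innovative steps, which swamps the Catalan term. What actually makes the tree count true is parity. If the walk's graph is a tree, the edges traversed an odd number of times up to time $t$ form the tree path from $i_0$ to $i_t$. Hence:
\begin{itemize}
\item at most one once-used edge meets the current vertex, so every second traversal is forced;
\item every first traversal goes to a new vertex;
\item only the $2(k-\ell)$ surplus traversals need a target.
\end{itemize}

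To close the gap you must also track the number $v$ of distinct vertices, that is, the cyclomatic number $c=\ell-v+1$. A walk with $c\ge1$ has only $n^{\ell+1-c}$ labelings. This factor $n^{-c}$ has to pay for the extra data such walks require:
\begin{itemize}
\item which first traversals close cycles, and where they land;
\item the choice made at second traversals from vertices carrying several once-used edges.
\end{itemize}
You need to show this data costs at most $k^{O(c)}$, up to factors that do not spoil the $k^4/(nr)$ rate per surplus traversal. Since $k$ is only polylogarithmic, that is negligible against $n^c$.

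This bookkeeping is where the real work of the F\"uredi--Koml\'os/Vu coding lies, and your sketch asserts it rather than proving it. It matters: the cruder F\"uredi--Koml\'os count loses roughly $k^6$ per unit of defect and would only give the lemma for $(n-1)r\gg(\log n)^6$. With the correct count, your geometric sum goes through as written. Alternatively, for $r\le1/2$ (the case used in the paper) you can simply quote Vu's theorem with $K=1$ and $\sigma^2=r(1-r)$.
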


We use the symmetric Lov\'asz Local Lemma in the standard form
$ep(D+1)\le1$; see Appendix~\ref{app:lll}.

We shall also use the standard local limit theorem for the multinomial
distribution; see, e.g., Ouimet~\cite{Ouimet2021}. Let
$X=(X_1,\ldots,X_q)\sim\operatorname{Mult}(s;1/q,\ldots,1/q)$,
where this denotes the color-count vector obtained from $s$ independent
uniform choices from $[q]$. The theorem implies that, for fixed $q\ge2$,
\[
 \sup_{\substack{(a_1, \ldots, a_q)\in\mathbb Z_{\ge0}^q\\[1pt]\sum_{i=1}^q a_i=s}}
 \Prb(X=(a_1, \ldots, a_q))
 =
 O_q\!\left(s^{-(q-1)/2}\right).
\]

\begin{lemma}[Typical complement properties]\label{lem:typical-complement}
With high probability, $H$ satisfies simultaneously (with $\Delta$ and $\omega$
denoting maximum degree and clique number):
\begin{enumerate}[label=(\roman*)]
\item $\Delta(H)=O(n^\beta)$;
\item uniformly for distinct $u,v$,
$$
 |N_H[u]\triangle N_H[v]|=(2+o(1))n^\beta;
$$
in particular, if $d_H(u)=d_H(v)$, then
$$|N_H[u]\setminus N_H[v]|
 =|N_H[v]\setminus N_H[u]|
 =(1+o(1))n^\beta;$$
\item every degree class of $H$ has size $O(n^{1-\beta/2})$;
\item $\omega(H)=O_\beta(1)$.
\end{enumerate}
\end{lemma}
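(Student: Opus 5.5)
We prove the four items separately, each by a first-moment or Chernoff estimate followed by a union bound. Throughout, $H\sim G(n,r)$ with $r=n^{-(1-\beta)}$, so the expected degree is $(n-1)r=(1+o(1))n^\beta$, and $n^\beta\gg\log n$ since $\beta>0$ is fixed.

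\emph{(i) and (ii).} Each degree $d_H(v)$ is $\operatorname{Binomial}(n-1,r)$. Chernoff with relative error $\varepsilon=n^{-\beta/3}$ gives failure probability $e^{-\Omega(n^{\beta/3})}$. A union bound over $n$ vertices then yields $d_H(v)=(1+o(1))n^\beta$ for all $v$, which proves (i).

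For (ii), fix $u\ne v$. For $w\notin\{u,v\}$, the events $w\in N_H(u)\triangle N_H(v)$ are independent with probability $2r(1-r)$. The vertices $u,v$ themselves lie in the symmetric difference of closed neighborhoods exactly when $uv\notin E(H)$. Hence
\[
|N_H[u]\triangle N_H[v]|=\operatorname{Binomial}(n-2,2r(1-r))+O(1),
\]
whose mean is $(2+o(1))n^\beta$. Chernoff plus a union bound over $n^2$ pairs gives the first claim of (ii). For the second claim, $d_H(u)=d_H(v)$ gives $|N_H[u]|=|N_H[v]|$. Therefore the two one-sided differences have equal size, and each is half the symmetric difference.

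\emph{(iii).} Fix an integer $k$. Since the degree is binomial with variance $\sim n^\beta$, the local limit bound (the maximum binomial point mass) gives $\Prb(d_H(v)=k)=O(n^{-\beta/2})$ uniformly in $k$. So the expected class size is $O(n^{1-\beta/2})$.

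To get concentration, I would use the second moment for the class $D_k=\{v:d_H(v)=k\}$. Conditioning on the single edge $uv$ makes $d_H(u)-A_{uv}$ and $d_H(v)-A_{uv}$ independent. This gives $\Prb(d_H(u)=d_H(v)=k)=O(n^{-\beta})$ and a covariance of $O(n^{-\beta}\cdot n^{-\beta/2}\cdot r)$ or smaller. Hence $\operatorname{Var}|D_k|=O(n^{1-\beta/2})+o\big((n^{1-\beta/2})^2\big)$.

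Chebyshev alone does not survive the union bound over the $O(n^\beta)$ relevant $k$. I therefore expect this to be the main obstacle, and would instead bound higher moments. The $t$-th factorial moment of $|D_k|$ is
\[
n^t\,\Prb(d_H(v_1)=\dots=d_H(v_t)=k).
\]
Exposing the $\binom t2$ edges among $v_1,\dots,v_t$ makes the remaining degrees independent binomials, so this probability is $O_t(n^{-t\beta/2})$. Hence $\E\binom{|D_k|}{t}=O_t\big((n^{1-\beta/2})^t/t!\big)$. Markov applied to the $t$-th moment, with a constant $C$ large and $t$ a large constant, gives
\[
\Prb\big(|D_k|>Cn^{1-\beta/2}\big)\le (e/C)^{t}\cdot O_t(1).
\]
This is not $o(n^{-\beta})$ for fixed $t$, so I would take $t=\lceil \log n\rceil$ and track the $O_t$ constant. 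That constant is $(1+O(t^2 r))^{O(1)}$, which is $1+o(1)$ since $t^2r\to0$. This yields failure probability $n^{-\omega(1)}$ per $k$, and the union bound over $k\le n$ finishes (iii). Only degrees $k$ within the range from (i) matter, but all $k\le n$ can be covered.

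\emph{(iv).} The expected number of $s$-cliques is at most
\[
n^s r^{\binom s2}=n^{s-(1-\beta)s(s-1)/2}.
\]
This tends to $0$ once $(1-\beta)(s-1)/2>1$, that is, for $s>1+2/(1-\beta)$. Markov's inequality then gives $\omega(H)\le 1+\lceil 2/(1-\beta)\rceil$ with high probability, which is $O_\beta(1)$.

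Intersecting the four high-probability events completes the proof.
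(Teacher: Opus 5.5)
Your proof is correct, and for (i), (ii) and (iv) it matches the paper's: Chernoff plus a union bound for the first two, and a first-moment count of $K_s$ for the last.

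For (iii) you take a genuinely different route.

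\emph{The paper's route.} It avoids moments altogether. It splits $V=L\cup R$ into halves and conditions on $H[L]$. For $v\in L$ this gives $d_H(v)=d_{H[L]}(v)+X_v$, where the $X_v\sim\operatorname{Bin}(|R|,r)$ are exactly independent. Hence each count $|\{v\in L:d_H(v)=t\}|$ is stochastically dominated by $\operatorname{Bin}(|L|,Cn^{-\beta/2})$. A single Chernoff bound then gives failure probability $\exp(-\Omega(n^{1-\beta/2}))$, which easily survives the union bound over $t$.

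\emph{Your route.} The factorial-moment argument with $t\approx\log n$ also works. Conditional on the internal edges among $v_1,\dots,v_t$, the probability is at most $\bigl(\sup_s\Prb(\operatorname{Bin}(n-t,r)=s)\bigr)^t\le(c\,n^{-\beta/2})^t$ for an absolute constant $c$. So the internal edges need no tracking at all.

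\emph{Two bookkeeping corrections.} First, the $t$-dependent constant is really this $c^t$, not $(1+O(t^2r))^{O(1)}$. This is harmless, since it is absorbed into the choice of $C$. Second, with $t=\lceil\log n\rceil$ and $C$ fixed, your Markov bound $(ec/C)^{t}$ equals $n^{-\log(C/(ec))}$. That is a fixed polynomial rather than $n^{-\omega(1)}$. It is still enough for the union bound over $k\le n$ once $C>e^{2}c$; alternatively, take $t=\lceil\log^2 n\rceil$.

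\emph{What each buys.} Your argument is a direct, self-contained moment computation with no splitting, at the cost of tracking constants in $t$, and it yields only polynomially small tails. The paper's half-splitting decoupling buys exact independence and an exponentially small tail with essentially no bookkeeping.
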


\begin{proof} All parts are proved using only elementary probabilistic tools.
Parts (i)--(ii) follow from Chernoff bounds. For (iii), split the
vertex set into two halves; conditional on the graph induced by one
half, the contributions from the other half are independent binomial
variables with maximum point mass $O(n^{-\beta/2})$, and a Chernoff
bound gives degree classes of size $O(n^{1-\beta/2})$. Part (iv)
follows from a first-moment bound for $K_t$. Complete details are given
in Appendix~\ref{app:typical-complement}.
\end{proof}

We are now ready to prove Theorem \ref{thm:near}.

{
\renewcommand{\proofname}{Proof of Theorem~\ref{thm:near}}
\begin{proof}
\medskip
\noindent

\underline{\emph{Lower bound.}}
By Lemma~\ref{lem:lu-peng}, since
$(n-1)r=(1+o(1))n^\beta\gg(\log n)^4$,
\[
 \|A_H-r(J-I)\|=O(n^{\beta/2})
\]
with high probability. Put
\[
 C:=A_H+I-rJ.
\]
Then
\begin{equation}\label{eq:gamma-bound}
\begin{aligned}
 \gamma:=\|C\|
 &\le \|A_H-r(J-I)\|+(1-r)\\
 &=O(n^{\beta/2})+\bigl(1-n^{-(1-\beta)}\bigr)
 =O(n^{\beta/2}).
\end{aligned}
\end{equation}
Condition on this event and on the event in
Lemma~\ref{lem:typical-complement}(iv). Suppose that
$V=P_1\cup\cdots\cup P_q$ is a multiset coloring of $G$, and let
$x_i\in\{0,1\}^n$ be the indicator vector of $P_i$.
We first show that, on at least half of the vertices, all the coordinates
$(Cx_i)_v$ are small. Since $\gamma=\|C\|_{2\to2}$, for every $i$,
$\|Cx_i\|_2\le\gamma\|x_i\|_2$. Hence
\begin{equation}\label{eq:spectral-energy}
 \sum_{i=1}^q\|Cx_i\|_2^2
 \le \gamma^2\sum_{i=1}^q\|x_i\|_2^2
 =\gamma^2n.
\end{equation}
Here $\|x_i\|_2^2=|P_i|$ and $\sum_i|P_i|=n$.

For each vertex $v$, define
$y_v=\sum_{i=1}^q(Cx_i)_v^2$. Summing over all vertices and interchanging
the order of summation gives
\[
 \sum_{v\in V}y_v
 =\sum_{v\in V}\sum_{i=1}^q(Cx_i)_v^2
 =\sum_{i=1}^q\|Cx_i\|_2^2
 \le\gamma^2n,
\]
where the last inequality is by (\ref{eq:spectral-energy}).
Consequently, fewer than $n/2$ vertices can satisfy
$y_v>2\gamma^2$; otherwise their contribution alone would exceed
$\gamma^2n$. Thus, for
\[
 W=\{v\in V:y_v\le2\gamma^2\},
\]
we have $|W|\ge n/2$. Moreover, for every $v\in W$ and every $i$,
\[
 |(Cx_i)_v|^2
 \le \sum_{j=1}^q(Cx_j)_v^2
 =y_v
 \le2\gamma^2,
\]
and hence $|(Cx_i)_v|\le\sqrt2\,\gamma$.

Now define
$h_i(v)=|N_H[v]\cap P_i|$. Since $A_H+I=rJ+C$, we have
\[
 h_i(v)=((A_H+I)x_i)_v=r|P_i|+(Cx_i)_v.
\]
Thus all possible integer values of $h_i(v)$ lie in an interval of
length at most $2\sqrt2\,\gamma$. Hence $h_i(v)$ takes at most
$2\sqrt2\,\gamma+1$ distinct values on $W$, and the full vector
\[
 h(v)=(h_1(v),\ldots,h_q(v))
\]
takes at most $(2\sqrt2\,\gamma+1)^q$
distinct values on $W$.

By \eqref{eq:complement-signature}, two vertices with the same $h$-vector
have the same multiset signature in $G$. Since
$P_1,\ldots,P_q$ is a multiset coloring, two adjacent vertices of $G$
cannot have the same signature. Consequently, every fiber
$\{v\in W:h(v)=z\}$ is an independent set in $G$, and hence a clique in
$H=\overline G$. By Lemma~\ref{lem:typical-complement}(iv), each such
fiber has size at most $\omega(H)=O_\beta(1)$.

Since $W$ is the union of at most $(2\sqrt2\,\gamma+1)^q$ such fibers,
\[
 \frac n2
 \le |W|
 \le \omega(H)(2\sqrt2\,\gamma+1)^q
 =O_\beta(1)O(n^{\beta q/2})
 =O_{\beta,q}\!\left(n^{\beta q/2}\right),
\]
where we used $\gamma=O(n^{\beta/2})$; see (\ref{eq:gamma-bound}). Here $q$ is a fixed integer;
since $\beta$ is fixed, there are only finitely many integers
$q<2/\beta$, so $q$ may be treated as a constant.

If $q<2/\beta$, then $\beta q/2<1$, and therefore the right-hand side is
$o(n)$, contradicting the left-hand side $n/2$. Thus no multiset
coloring with an integer number $q<2/\beta$ of parts exists, and
consequently
\[
 \chi_m(G)\ge\left\lceil\frac{2}{\beta}\right\rceil
 \ge\frac{2}{\beta}.
\]

\medskip
\noindent
\underline{\emph{Upper bound.}}
Set $q=\left\lfloor\frac2\beta\right\rfloor+5$.
By Lemma~\ref{lem:typical-complement}, with high probability $H$
satisfies properties (i)--(iv). Fix any realization of $H$ satisfying
these properties, and color its vertices independently and uniformly
from $[q]$.

Only pairs $uv\notin E(H)$ can be bad, since these are precisely the
edges of $G$. Moreover, if $u$ and $v$ have equal closed-neighborhood
color-count vectors, then
\[
 d_H(u)+1=\sum_i h_i(u)=\sum_i h_i(v)=d_H(v)+1,
\]
so necessarily $d_H(u)=d_H(v)$. Hence only pairs
$uv\notin E(H)$ with $d_H(u)=d_H(v)$ need to be considered. For every such pair, put
\[
 U_{uv}=N_H[u]\setminus N_H[v],
 \qquad
 V_{uv}=N_H[v]\setminus N_H[u].
\]
By Lemma~\ref{lem:typical-complement}(ii),
\[
 |U_{uv}|=|V_{uv}|=(1+o(1))n^\beta
\]
uniformly over all relevant pairs.

Let $\mathcal B_{uv}$ be the event that $u$ and $v$ have equal
closed-neighborhood color-count vectors. The common part
$N_H[u]\cap N_H[v]$ contributes equally to the two vectors, so $\mathcal B_{uv}$
occurs exactly when the color-count vectors on $U_{uv}$ and $V_{uv}$
are equal. Since these sets are disjoint, the two vectors are
independent multinomial random vectors with
$s=(1+o(1))n^\beta$ trials and $q$ equally likely colors. Let $X$ and $Y$
denote these two vectors. Since $X$ and $Y$ are independent and identically distributed, by the multinomial local limit theorem quoted
above,
\[
 \sup_a\Prb(X=a)=O_q\!\left(s^{-(q-1)/2}\right).
\]
Hence, uniformly over all relevant pairs,
\[
\begin{aligned}
 \Prb(\mathcal B_{uv})
 &=\Prb(X=Y)
 =\sum_a\Prb(X=a)^2\\
 &\le
 \left(\sup_a\Prb(X=a)\right)\sum_a\Prb(X=a)
 =O_q\!\left(s^{-(q-1)/2}\right)
 =O_q\!\left(n^{-\beta(q-1)/2}\right).
\end{aligned}
\]

It remains to control dependencies. The event $\mathcal B_{uv}$ depends only on
the colors in
\[
 W_{uv}=N_H[u]\triangle N_H[v].
\]
Define a graph on the bad events by joining $\mathcal B_{uv}$ and
$\mathcal B_{xy}$ whenever $W_{uv}\cap W_{xy}\ne\varnothing$. Since the
vertex colors are independent, this is a dependency graph for the family of
bad events. By Lemma~\ref{lem:typical-complement}(i),(iii), a fixed vertex
$w$ belongs to $W_{xy}$ for at most
\[
 |N_H[w]|\cdot\max_t|\{v:d_H(v)=t\}|=O(n^{1+\beta/2})
\]
relevant pairs $xy$: if $w\in N_H[x]\triangle N_H[y]$, exactly one endpoint lies
in $N_H[w]$, and the other has the same $H$-degree. Since
$|W_{uv}|=O(n^\beta)$ by Lemma~\ref{lem:typical-complement}(ii), every
$\mathcal B_{uv}$ is therefore adjacent in the dependency graph to at most
\[
 D=O(n^\beta)\,O(n^{1+\beta/2})=O(n^{1+3\beta/2})
\]
other bad events.
Consequently,
\[
 \Prb(\mathcal B_{uv})(D+1)
 =
 O_q\!\left(n^{\,1+\beta(4-q)/2}\right).
\]
Our choice
$q=\lfloor2/\beta\rfloor+5$ satisfies
$q>4+2/\beta$, and hence the exponent is negative, that is
\[
 1+\frac{\beta(4-q)}2<0.
\]
Therefore
\[
 e\,\Prb(\mathcal B_{uv})(D+1)=o(1),
\]
so the hypothesis of the symmetric Lov\'asz Local Lemma holds for all
sufficiently large $n$. Hence there exists a coloring for which no bad
event occurs.

By \eqref{eq:complement-signature}, this coloring is a multiset coloring
of $G$, proving $\chi_m(G)\le\lfloor2/\beta\rfloor+5$.
Moreover, the Moser--Tardos resampling algorithm~\cite{MoserTardos2010}
finds such a coloring in expected polynomial time.
\end{proof}
}

\appendix
\makeatletter
\renewcommand*{\theHsection}{app.\Alph{section}}
\renewcommand*{\theHsubsection}{\theHsection.\arabic{subsection}}
\makeatother

\section{Technical details for the fixed-density regime}
\label{app:half-three-supplement}

\subsection{Covariance details for the unresolved-edge count}
\label{app:half-variance}

We give here the covariance calculation omitted from the proof of
Lemma~\ref{lem:half-random-estimates}.

It remains to control the variance. Write
\[
 \operatorname{Var}(|E(H)|)
 =
 \sum_e\operatorname{Var}(I_e)
 +2\sum_{e<f}\operatorname{Cov}(I_e,I_f).
\]
The diagonal contribution is immediate:
\[
 \sum_e\operatorname{Var}(I_e)
 \le \sum_e\E I_e
 =O(n).
\]

Suppose next that two pairs share one endpoint, say $e=uv$ and $f=uw$.
Condition on the three edge indicators inside $\{u,v,w\}$. For a fixed
part $C\in\{A,B\}$, after deleting $u,v,w$, the numbers of neighbors of
these three vertices in $C\setminus\{u,v,w\}$ are independent
$\operatorname{Bin}(m_C,p)$ variables $X,Y,Z$, with $m_C=\Theta(n)$.
The two required degree equalities become
\[
 X-Y=r,\qquad X-Z=s,
\]
where $r,s=O(1)$. Therefore
\[
\begin{aligned}
 \Prb(X-Y=r,\ X-Z=s)
 &=
 \sum_x \Prb(X=x)\Prb(Y=x-r)\Prb(Z=x-s)\\
 &\le
 \left(\sup_j\Prb(Y=j)\right)
 \left(\sup_j\Prb(Z=j)\right)
 =O(m_C^{-1}),
\end{aligned}
\]
because, for fixed $p\in(0,1)$, a $\operatorname{Bin}(m_C,p)$ variable
has maximum point mass $O(m_C^{-1/2})$. The $A$- and $B$-conditions use
disjoint edge variables, so
\[
 \Prb(I_{uv}=I_{uw}=1)=O(n^{-2}).
\]
Since also $\Prb(I_{uv}=1),\Prb(I_{uw}=1)=O(n^{-1})$, it follows that
\[
 |\operatorname{Cov}(I_{uv},I_{uw})|
 \le
 \Prb(I_{uv}=I_{uw}=1)
 +
 \Prb(I_{uv}=1)\Prb(I_{uw}=1)
 =
 O(n^{-2}).
\]
There are $O(n^3)$ such pairs, so their total covariance contribution is
$O(n)$.

Finally, let $e=uv$ and $f=xy$ have four distinct endpoints, and expose
the four cross edges
\[
 Z=(A_{ux},A_{uy},A_{vx},A_{vy})\in\{0,1\}^4.
\]
Conditioned on $Z=z$, all remaining edge variables used by $I_e$ are
disjoint from those used by $I_f$, so the two indicators are
conditionally independent. The exposed edges only change the required
degree differences by bounded amounts. Hence, uniformly over the
sixteen possible values of $z$, Lemma~\ref{lem:half-qm} gives
\[
 \Prb(I_e=1\mid Z=z)
 =
 \frac{1+o(1)}{\pi\sqrt3\,(1-p)n},
 \qquad
 \Prb(I_f=1\mid Z=z)
 =
 \frac{1+o(1)}{\pi\sqrt3\,(1-p)n}.
\]
Therefore
\[
 \Prb(I_e=I_f=1)
 =
 \E\!\left[
   \Prb(I_e=1\mid Z)\Prb(I_f=1\mid Z)
 \right]
 =
 \frac{1+o(1)}
 {3\pi^2(1-p)^2n^2}.
\]
The one-edge estimate gives the same asymptotic for
$\Prb(I_e=1)\Prb(I_f=1)$, and hence
\[
 \operatorname{Cov}(I_e,I_f)=o(n^{-2}).
\]
There are $O(n^4)$ vertex-disjoint pairs of pairs, so their total
contribution is $o(n^2)$.

Combining the three contributions,
\[
 \operatorname{Var}(|E(H)|)=o(n^2).
\]
Chebyshev's inequality and the estimate for $\E|E(H)|$ then give
\eqref{eq:half-M-concentration}.

\subsection{Why the ratio $1/4:3/4$ is natural and optimal for this method}
\label{app:half-three-ratio}

To make the origin of the constants transparent, consider the same
strategy with
\[
 |A|=\alpha n+O(1),
 \qquad
 |B|=(1-\alpha)n+O(1),
\]
where $0<\alpha<1$, while $p\in(0,1)$ is fixed.

For a fixed pair of vertices, conditional on the pair being an edge, the
two degree-collision probabilities have leading terms
\[
 \frac{1}{\sqrt{4\pi\alpha n p(1-p)}}
 \quad\text{and}\quad
 \frac{1}{\sqrt{4\pi(1-\alpha)n p(1-p)}}.
\]
Including the factor $p$ for the existence of the edge itself gives
\[
 \Prb(uv\in E(H))
 \sim
 \frac{1}{4\pi(1-p)n\sqrt{\alpha(1-\alpha)}}.
\]
The same second-moment argument as above therefore gives, with high
probability,
\begin{equation}\label{eq:half-general-H-density}
 \frac{|E(H)|}{n}
 \sim
 \frac{1}{8\pi(1-p)\sqrt{\alpha(1-\alpha)}}.
\end{equation}

On the other hand, each coordinate of the normal vector on $B$ is nonzero
with probability $2p(1-p)$. Chernoff's inequality and a union bound over
all pairs therefore give, with high probability and uniformly in $u\ne v$,
\begin{equation}\label{eq:half-general-support}
 \frac{|\operatorname{supp}(a^{uv})|}{n}
 \sim
 2p(1-p)(1-\alpha).
\end{equation}

For the Linial--Radhakrishnan contradiction to work, the quantity in
\eqref{eq:half-general-support} must be larger than twice the quantity in
\eqref{eq:half-general-H-density}. Thus we require
\[
 8\pi p(1-p)^2(1-\alpha)\sqrt{\alpha(1-\alpha)}>1.
\]
For fixed $p$, maximizing the left-hand side is equivalent to maximizing
\[
 f(\alpha)=\alpha(1-\alpha)^3.
\]
Since
\[
 f'(\alpha)=(1-\alpha)^2(1-4\alpha),
\]
the unique interior maximum occurs at $\alpha=1/4$. At this point the
condition becomes exactly
\[
 \frac{3\sqrt3\,\pi}{2}\,p(1-p)^2>1,
\]
which is the condition stated in the introduction. Therefore the ratio $1/4:3/4$ remains
optimal for this strategy for every fixed $p$. In particular, this
condition holds throughout $0.185<p<0.509$.

\section{Details for the typical complement properties}
\label{app:typical-complement}

We give the details for Lemma~\ref{lem:typical-complement}. Recall that
$H\sim G(n,r)$ with $r=n^{\beta-1}$, so $nr=n^\beta$.

We use the standard Chernoff bound (see, e.g., \cite[Chapter~2]{JLR2000}): if $X$ is a sum of independent Bernoulli random variables with mean $\mu$, then for $0<\delta\leq1$,
\[
 \Prb(|X-\mu|\geq \delta\mu)
 \leq 2\exp\!\left(-\frac{\delta^2\mu}{3}\right).
\]

\paragraph{Proof of (i).}
For every $v\in V(H)$, we have $d_H(v)\sim\operatorname{Bin}(n-1,r)$ with mean
$\mu=(n-1)r=(1+o(1))n^\beta$. Taking $\delta=1$ in the Chernoff bound gives
$\Prb(d_H(v)\geq2\mu)\leq 2e^{-\mu/3}$. Hence, by a union bound over all vertices,
\[
 \Prb(\Delta(H)\geq2\mu)
 \leq 2n e^{-\mu/3}=o(1),
\]
since $\mu=\Theta(n^\beta)\gg\log n$. Thus $\Delta(H)=O(n^\beta)$ with high probability.

\paragraph{Proof of (ii).}
Fix distinct $u,v$, and let $Z_{uv}$ be the number of vertices
$w\notin\{u,v\}$ adjacent to exactly one of $u$ and $v$. For each such $w$, this
occurs with probability $2r(1-r)$, independently of the other vertices. Therefore
\[
 Z_{uv}\sim\operatorname{Bin}(n-2,2r(1-r)),
 \qquad
 \mu:=\E Z_{uv}
 =2(n-2)r(1-r)
 =(2+o(1))n^\beta.
\]

Set $\delta=\sqrt{12\log n/\mu}$. Since $\mu\gg\log n$, we have
$\delta=o(1)$, and Chernoff gives
\[
 \Prb(|Z_{uv}-\mu|\geq\delta\mu)
 \leq 2e^{-\delta^2\mu/3}
 =2n^{-4}.
\]
A union bound over fewer than $n^2$ pairs shows that, with probability
$1-o(1)$, simultaneously for all distinct $u,v$,
$Z_{uv}=\mu+O(\sqrt{\mu\log n})=(2+o(1))n^\beta$.

Now the vertices outside $\{u,v\}$ contribute exactly $Z_{uv}$ to
$N_H[u]\triangle N_H[v]$. The pair $u,v$ contributes $0$ if $uv\in E(H)$ and
$2$ otherwise. Hence
\[
 |N_H[u]\triangle N_H[v]|
 =Z_{uv}+2\1[uv\notin E(H)]
 =(2+o(1))n^\beta,
\]
uniformly over all distinct $u,v$.

Finally, if $d_H(u)=d_H(v)$, then $|N_H[u]|=|N_H[v]|$, and therefore
$|N_H[u]\setminus N_H[v]|=|N_H[v]\setminus N_H[u]|$. Since their sum equals
$|N_H[u]\triangle N_H[v]|$, both are $(1+o(1))n^\beta$. This proves (ii).

\paragraph{Proof of (iii).}
Split $V(H)=L\cup R$ with $|L|,|R|=n/2+O(1)$ and condition on
$H[L]$. For $v\in L$ write
\[
 d_H(v)=d_{H[L]}(v)+X_v,
 \qquad X_v\sim\operatorname{Bin}(|R|,r).
\]
The variables $(X_v)_{v\in L}$ are independent, and Stirling's formula
gives $\sup_s\Prb(X_v=s)=O(n^{-\beta/2})$. Hence, for every fixed
degree $t$,
\[
 Y_t^L:=|\{v\in L:d_H(v)=t\}|
\]
is stochastically dominated by
$\operatorname{Bin}(|L|,Cn^{-\beta/2})$. For a sufficiently large
constant $C'$, Chernoff's inequality gives, uniformly in $t$ and in the
realization of $H[L]$,
\[
 \Prb\bigl(Y_t^L>C'n^{1-\beta/2}\mid H[L]\bigr)
 \le \exp\bigl(-\Omega(n^{1-\beta/2})\bigr).
\]
A union bound over all possible $t$, and the same argument with $L$ and
$R$ interchanged, show that every degree class has size
$O(n^{1-\beta/2})$ with high probability.

\paragraph{Proof of (iv).}
Choose a fixed integer $t>1+2/(1-\beta)$, and let $X_t$ count the
copies of $K_t$ in $H$. Then
\[
 \E X_t
 \le \binom nt r^{\binom t2}
 \le n^{\,t-(1-\beta)\binom t2}
 =o(1),
\]
because the exponent is negative. Markov's inequality shows that $H$
contains no $K_t$ with high probability, and hence
$\omega(H)=O_\beta(1)$.

\subsection{Symmetric Lov\'asz Local Lemma}
\label{app:lll}

\begin{lemma}[Symmetric Lov\'asz Local Lemma]
\label{lem:symmetric-lll}
Let $\{A_\alpha\}_{\alpha\in\mathcal I}$ be a finite family of events.
Suppose that $\Prb(A_\alpha)\le p$ for every $\alpha\in\mathcal I$, and
that each $A_\alpha$ is independent of all but at most $D$ other events.
If
\[
 e\,p(D+1)\le1,
\]
then
\[
 \Prb\!\left(\bigcap_{\alpha\in\mathcal I}\overline{A_\alpha}\right)>0.
\]
In particular, there exists an outcome in which none of the events
$A_\alpha$ occurs.
\end{lemma}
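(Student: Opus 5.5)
We prove the symmetric Lov\'asz Local Lemma by deriving it from the general (asymmetric) form, which we establish directly by the standard inductive conditioning argument.

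\textbf{Setup.} Fix the dependency structure. For each $\alpha$, let $\Gamma(\alpha)$ be a set of at most $D$ indices such that $A_\alpha$ is mutually independent of the family $\{A_\beta:\beta\notin\Gamma(\alpha)\cup\{\alpha\}\}$. If $D=0$, the events are mutually independent and each has probability at most $p\le 1/e<1$, so the claim is immediate. We therefore assume $D\ge1$ and set $x:=1/(D+1)$.

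\textbf{Key claim.} For every $S\subseteq\mathcal I$ and every $\alpha\notin S$ with $\Prb(\bigcap_{\beta\in S}\overline{A_\beta})>0$,
\[
 \Prb\Bigl(A_\alpha\,\Big|\,\bigcap_{\beta\in S}\overline{A_\beta}\Bigr)\le x.
\]
We prove this by induction on $|S|$. Split $S$ into $S_1=S\cap\Gamma(\alpha)$ and $S_2=S\setminus S_1$. If $S_1=\varnothing$, mutual independence gives conditional probability $\Prb(A_\alpha)\le p\le x$. Otherwise we write the conditional probability as a ratio. The numerator is at most
\[
 \Prb\Bigl(A_\alpha\,\Big|\,\bigcap_{S_2}\overline{A_\beta}\Bigr)=\Prb(A_\alpha)\le p.
\]
The denominator is $\Prb\bigl(\bigcap_{S_1}\overline{A_\beta}\mid\bigcap_{S_2}\overline{A_\beta}\bigr)$. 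Expanding it by the chain rule over the at most $D$ elements of $S_1$ and applying the induction hypothesis to each factor bounds it below by $(1-x)^D$.

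This denominator estimate is the main obstacle. It needs two things: positivity of all intermediate conditioning events, which itself follows inductively from the factors being at least $1-x>0$; and the numeric inequality
\[
 \frac{p}{(1-x)^D}\le x.
\]
For the latter we use $(1-\tfrac1{D+1})^D\ge 1/e$. Then
\[
 \frac{p}{(1-x)^D}\le ep\le\frac1{D+1}=x,
\]
where the last step is exactly the hypothesis $ep(D+1)\le1$.

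\textbf{Conclusion.} Enumerate $\mathcal I=\{\alpha_1,\ldots,\alpha_N\}$ and apply the chain rule:
\[
 \Prb\Bigl(\bigcap_i\overline{A_{\alpha_i}}\Bigr)=\prod_i\Bigl(1-\Prb\bigl(A_{\alpha_i}\mid\textstyle\bigcap_{j<i}\overline{A_{\alpha_j}}\bigr)\Bigr)\ge(1-x)^N>0.
\]
Each conditioning event is legitimate, because positivity propagates step by step along the chain. This yields the existence of an outcome in which none of the events $A_\alpha$ occurs.
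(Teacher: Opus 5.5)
Your argument is correct: it is the standard inductive proof of the Local Lemma, specialized to the uniform weight $x=1/(D+1)$, with $D=0$ treated separately and the bound $(1-\frac{1}{D+1})^D\ge 1/e$ closing the induction. The paper gives no proof of its own and simply cites Alon--Spencer, Chapter~5, where this same argument appears, including your reading of ``independent of all but at most $D$ other events'' as mutual independence from the remaining family.
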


This is the standard symmetric form of the Lov\'asz Local Lemma; see,
for example,~\cite[Chapter~5]{AlonSpencer2016}.

\end{document}